\theoremstyle{plain}
\newtheorem{theorem}{Theorem}
\newtheorem{lemma}[theorem]{Lemma}
\newtheorem{corollary}[theorem]{Corollary}
\theoremstyle{plain} 
\newcommand{\thistheoremname}{}
\newtheorem*{genericthm*}{\thistheoremname}
\newenvironment{namedthm*}[1]
  {\renewcommand{\thistheoremname}{#1}%
   \begin{genericthm*}}
  {\end{genericthm*}}
\theoremstyle{definition}
\newtheorem{definition}[theorem]{Definition}
\newcommand{\lra}{\longrightarrow}
\newcommand{\mbf}{\mathbf}
\newcommand{\mbb}{\mathbb}
\renewcommand{\maketitle}{
\begin{center}
\vspace{2ex}
{\huge \textsc{\@title}}\\
\vspace{3ex}
{\large\textsc{\@author}}
\vspace{1ex}
\end{center}
}
\author{William W. Menasco and Deepisha Solanki}
\begin{document}

\title{Studying links via plats: split and composite links}

\maketitle


\begin{abstract}

Our main results concern changing an arbitrary plat presentation of a split or composite link to one which is obviously recognizable as being split or composite. {\em Pocket moves}, first described in \cite{unlinkviaplats}, are utilized---a pocket move alters a plat presentation without changing its link type, its bridge index or the double coset. A plat presentation of a split link is {\em split} if the planar projection of the plat presentation is not connected. We prove that pocket moves are the only obstruction to representing split links by split plat presentations. Since any pocket move corresponds to a sequence of double coset moves, we have the corollary that the double coset of every plat presentation of a split link has a split plat presentation. We obtain an analogous result for composite links by utilizing {\em flip moves}, which were also first described in \cite{unlinkviaplats}.


\end{abstract}

\section{\centering {Introduction}}
This paper builds on the ideas and singular foliation technology first introduced in \cite{unlinkviaplats}. In that paper, non-oriented links in $\mathbb{R}^3$ were studied using plat presentations. Specifically, the {\em pocket move} and the {\em flip move} on plat presentations of non-oriented links were introduced. Both moves are isotopies that preserve the bridge index of the presentations. For pocket moves it was shown that they correspond to a sequence of moves in the double coset. Flip moves were shown to be equivalent to a precisely depicted sequence of stabilizations, double coset moves, destabilizations. 

We established the following notation.
$\mathcal{L}$ will be a non-oriented link type in $\mathbb{R}^3$ and $L \subset \mathbb{R}^3$ will be an arbitrary representative of $\mathcal{L}$.

Let $L \subset \mathbb{R}^3$ be a representation of $\mathcal{L}$. $\mathcal{L}$ is {\em split} if there exists a $2$-sphere, $S \subset \mathbb{R}^3 \setminus L$, such that $S$ does not bound either a $3$-ball or $\mathbb{R}^3$ minus an open $3$-ball in $\mathbb{R}^3 \setminus L$.  $\mathcal{L}$ is {\em composite} if there exists a $2$-sphere, $S_p \subset \mathbb{R}^3$, twice punctured by $L$ such that neither component of $\mathbb{R}^3 \setminus S_p$ intersects $L$ in a single unknotted arc.

As in \cite{unlinkviaplats}, we will be focusing on representatives $L$ of $\mathcal{L}$ that are $n$-bridge plat presentations.  (The reader should see \cite{birman_1976} for an introduction to plat presentations.) Although our arguments will be mostly geometric it is convenient to utilize the algebra coming from braids to describe the associated $2n$-braid (on $2n$ strands) in terms of the classical Artin generators, $\{ \sigma_i \ | \ 1 \leq i \leq 2n-1 \}$.  For a plat presentation, $L$, we will use the notation, $W(L)$, for the associated braid word, written in terms of the $\sigma_i \text{'s}$.

A plat presentation, $L$, is {\em split} if $W(L)$ is missing a $\sigma_{2i}$ for $1 \leq i < n$.  For such a braid word one can readily see the associated splitting $2$-sphere in a regular planar diagram of the plat, $L$---such a sphere will intersect the plane containing the diagram in a circle that splits the diagram.

A plat presentation, $L$, is {\em composite} if $W(L)$ is missing a $\sigma_{2i+1}$, $1 \leq i < n-1$.  For such a braid word one can readily see a composing sphere that intersects the regular planar diagram of $L$ in a circle that intersects the diagram twice, once at a ``top'' and once at a ``bottom'' bridge.



The generic situation is that the $2$-sphere illustrating that a link type is split or composite is ``obscured'' for an arbitrary plat presentation.  It is through the uses of pocket and flip moves that we can take such an obscured $2$-sphere and make it readily observable as it would be if the plat presentation was split or composite.  Specifically, we have the following main theorems:




\begin{theorem}
Let $\mathcal{L}$ be a split link type. Let $L$ be an $n$-bridge plat presentation of $\mathcal{L}$. Then, there exists a finite sequence of plat presentations of $\mathcal{L}$:
$$ L = L_0 \lra L_1 \lra L_2 \lra \cdots \lra L_k $$
such that $L_k$ is split and $L_{i+1}$ is obtained from $L_{i}$ via the following \textbf{constant bridge index} moves: 
\begin{itemize}
\item[(i)] a braid isotopy on the 2n strands;
\item[(ii)] the pocket move.
\item[(iii)] double coset moves.
\end{itemize}
\label{thm1}
\end{theorem}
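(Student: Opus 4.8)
The plan is to take an essential splitting sphere and run it through the singular-foliation apparatus of \cite{unlinkviaplats}, reducing a suitable complexity until the sphere sits in an obviously split position. First I would fix a $2$-sphere $S \subset \mathbb{R}^3 \setminus L$ witnessing that $\mathcal{L}$ is split, and put $S$ into general position with respect to the disk fibration underlying the plat foliation. The intersection of $S$ with the fibers then induces a singular foliation on $S$, whose elliptic and hyperbolic data is constrained by $\chi(S) = 2$. Standard innermost-disk isotopies remove the inessential leaves (those bounding disks, on $S$ or in the fibers, disjoint from $L$), so that the remaining foliation decomposes $S$ into finitely many tiles indexed by their singularities.

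Next I would define a complexity for the pair $(S,L)$---for instance the number of hyperbolic (saddle) singularities of the induced foliation, refined by the number of times $S$ meets the distinguished disks separating adjacent strand groups---and argue by induction that it can always be lowered by one of the permitted moves. The mechanism is the usual one: locate a ``bad'' configuration of tiles on $S$ (typically centered on a removable saddle or a pocket) whose existence is forced by the Euler-characteristic count, and then realize its removal as an ambient isotopy of $L$ built from braid isotopy on the $2n$ strands together with double coset moves on the middle braid. Since the double coset moves are exactly the algebraic freedom relating plat presentations, each elementary change of foliation should descend to a move of type (i) or (iii).

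The hard part will be the configurations that, in the classical closed-braid setting, would force a \emph{stabilization} and hence raise the bridge index---precisely what the theorem forbids. This is where the pocket move earns its place: I expect that each such configuration corresponds to a piece of $S$ capping off a pocket against the plat, and that the pocket move of \cite{unlinkviaplats} removes the offending saddles while keeping the bridge index fixed. Verifying that pocket moves, together with braid isotopy and double coset moves, suffice to clear every obstruction the foliation throws up---i.e.\ that no genuine stabilization is ever unavoidable---is the technical heart of the argument. Here I would lean on the prior paper's fact that a pocket move is itself a sequence of double coset moves, which keeps the entire reduction inside the allowed move set (i)--(iii).

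Finally, when the complexity bottoms out, the induced foliation should have no hyperbolic singularities separating the two sides of $S$, so the sphere meets the diagram plane in a single circle that splits the planar projection into two nonempty pieces. In the resulting presentation $L_k$ the strands on the two sides of this circle never cross, which is to say $W(L_k)$ omits some $\sigma_{2i}$ with $1 \le i < n$; hence $L_k$ is split, completing the induction.
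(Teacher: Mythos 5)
Your proposal follows essentially the same route as the paper: foliate the splitting sphere by the level planes of the plat height function, surger off inessential leaves, use the Euler-characteristic count (the paper's $|T_1|-|T_3|=2$) to force a removable saddle--extremum configuration, take the saddle count as complexity, and eliminate each such configuration by a pocket move (realizable as double coset moves) until the sphere reaches the standard two-tile position where $W(L_k)$ omits some $\sigma_{2i}$. The only deviation is cosmetic: you phrase the foliation in closed-braid language (disk fibration, elliptic points), whereas in the plat setting the singularities are local extrema and saddles of the height function, but the argument is otherwise the paper's.
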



\begin{theorem}
Let $\mathcal{L}$ be a composite link. Let $L$ be an $n$-bridge plat presentation of $\mathcal{L}$. Then, there exists a finite sequence of plat presentations of $\mathcal{L}$:
$$ L = L_0 \lra L_1 \lra L_2 \lra \cdots \lra L_k $$
such that $L_k$ is a composite plat and $L_{i+1}$ is obtained from $L_{i}$ via the following \textbf{constant bridge index} moves: 
\begin{itemize}
\item[(i)] a braid isotopy on the 2n strands;
\item[(ii)] the pocket move;
\item[(iii)] the flip move;
\item[(iv)] double coset moves 
\end{itemize}
\label{thm2}
\end{theorem}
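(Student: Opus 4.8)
The plan is to adapt the strategy behind the split case (Theorem \ref{thm1}), replacing the disjoint splitting sphere by a twice-punctured composing sphere $S_p$ and bringing in the flip move to handle the two punctures. First I would fix a composing sphere $S_p \subset \mathbb{R}^3$ witnessing that $\mathcal{L}$ is composite, positioned so that $S_p \cap L$ is exactly two points $p_1,p_2$ and neither side of $S_p$ meets $L$ in a single unknotted arc. After a preliminary isotopy I would put $S_p$ in general position with respect to the disk-fibration adapted to the $2n$-strand plat (the one underlying the singular foliation technology of \cite{unlinkviaplats}), so that $S_p$ inherits its induced singular foliation, with $p_1,p_2$ recorded as two distinguished marked points through which $L$ threads.

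The heart of the proof is the reduction of this foliation, which I would run exactly parallel to the proof of Theorem \ref{thm1}. The foliation has only elliptic and hyperbolic singularities, and its combinatorics are constrained by $\chi(S_p)=2$ together with the data of the two marked points. I would then apply the same catalogue of simplifications: cancelling inessential elliptic--hyperbolic pairs, eliminating low-valence vertices of the foliation graph by braid isotopies and double coset moves, and clearing the pocket configurations by pocket moves. Each of these is constant-bridge-index and preserves $\mathcal{L}$ by the results quoted from \cite{unlinkviaplats}, so every intermediate stage is a legitimate $L_i$ in the desired sequence.

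The genuinely new feature, and the step I expect to be the main obstacle, is that $S_p$ is anchored to $L$ at the two marked points, so the foliation is forced to carry the two arcs $L \cap B_\pm$ (where $B_\pm$ are the two sides of $S_p$) through $p_1$ and $p_2$. The pocket/double-coset reduction above can stall at configurations in which a marked point sits at an interior or ``side'' position rather than on a top or bottom bridge, and such configurations cannot be removed by pocket moves alone. This is exactly where the flip move is needed: I would show that each residual configuration around a marked point is eliminated by a flip, invoking the fact from \cite{unlinkviaplats} that a flip is a constant-bridge-index isotopy realized by a prescribed sequence of stabilizations, double coset moves, and destabilizations. The delicate point is to guarantee that introducing flips does not regenerate complexity elsewhere, i.e. that the combined pocket/flip reduction terminates; I would arrange this with a complexity that is lexicographic in the number of hyperbolic singularities of the foliation and then in the number of bridges met by $S_p$, and check that each move strictly decreases it.

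Finally, once the foliation on $S_p$ has been driven to its standard form, the sphere meets the planar diagram of the resulting plat $L_k$ in a single circle crossing the diagram exactly twice, once at a top bridge and once at a bottom bridge, with $p_1,p_2$ at those two bridges. Reading the braid word off this position shows that $W(L_k)$ is missing some $\sigma_{2i+1}$ with $1 \le i < n-1$, so $L_k$ is a composite plat. Since every move used was constant-bridge-index and of one of the four permitted types, this yields the required finite sequence $L=L_0 \lra \cdots \lra L_k$ and completes the proof.
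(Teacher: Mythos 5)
Your proposal follows essentially the same route as the paper: induce the singular foliation on the composing sphere $S_p$ from the plat height function, use the Euler-characteristic count on the tiling to run the same pocket-move reduction as in the split case, and reserve the flip move for the configurations adjacent to the two punctures, terminating in the standard position where the composing circle meets the diagram once at a top and once at a bottom bridge. The only differences are bookkeeping: the paper isolates the puncture-adjacent obstruction in a dedicated lemma (a $T_3$ saddle tile attached to a $T_p$ tile, with the flip used to free an outermost strand before the pocket move empties the neighboring $T_1$ tile) and measures complexity simply by $|T_3|$, whereas your lexicographic measure is an added termination safeguard for a point the paper leaves implicit.
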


For an $n$-bridge plat presentation $L$ of the non-oriented link type $\mathcal{L}$ we use $\mathbf{DC}(L)$ to denote the collection of all $n$-bridge plat presentations in the same double coset as $L$.  We then have the following corollary to Theorem \ref{thm1}:

\begin{corollary}
    \label{corollary: double cosets}
    For every plat presentation $L$ of a split link, there exists a plat presentation $L^\prime \subset \mathbf{DC}(L)$ such that $L^\prime$ is a split plat presentation.
\end{corollary}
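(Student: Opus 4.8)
The plan is to verify that each of the three move types appearing in Theorem \ref{thm1} keeps a plat presentation inside a single double coset, and then to read off the conclusion directly from the sequence produced by that theorem. By definition $\mathbf{DC}(L)$ consists of all $n$-bridge plat presentations lying in the same double coset as $L$, so the task reduces to checking the invariance of this double coset under moves (i), (ii), and (iii).

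First I would dispatch moves (i) and (iii). A braid isotopy on the $2n$ strands alters only the word representing $W(L_i)$; the underlying element of $B_{2n}$ is unchanged, so the double coset is literally the same. Double coset moves preserve the double coset by definition. This leaves the pocket move as the only move requiring external input.

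For the pocket move I would invoke the result recalled in the introduction and established in \cite{unlinkviaplats}: every pocket move corresponds to a finite sequence of double coset moves. Consequently a pocket move carries $W(L_i)$ to another representative of the same double coset, so move (ii) also preserves $\mathbf{DC}$.

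With all three moves shown to be double-coset preserving, I would apply Theorem \ref{thm1} to the given plat presentation $L$ of the split link. It yields a finite chain $L = L_0 \lra L_1 \lra \cdots \lra L_k$ of $n$-bridge plat presentations with $L_k$ split, each arrow being one of the moves (i)--(iii). Since every arrow fixes the double coset and the bridge index is constant along the chain, we conclude $L_k \in \mathbf{DC}(L)$. Setting $L' = L_k$ gives a split plat presentation in $\mathbf{DC}(L)$, as required. The argument is essentially bookkeeping; the only non-trivial ingredient is the cited decomposition of pocket moves into double coset moves, while the genuine geometric content is already contained in Theorem \ref{thm1}.
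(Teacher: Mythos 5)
Your proposal is correct and follows the same route as the paper, which derives the corollary from Theorem \ref{thm1} together with the fact (Lemma \ref{pmisdcm}, Lemma 3 of \cite{unlinkviaplats}) that pocket moves can be realised as a sequence of double coset moves. You merely spell out the bookkeeping---braid isotopies fix the underlying element of $\mathcal{B}_{2n}$, double coset moves preserve the coset by definition---that the paper leaves implicit in its one-sentence proof.
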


The corollary follows from Theorem \ref{thm1} and Lemma 3 of \cite{unlinkviaplats} that pocket moves can be realized as a sequence of double coset moves.

\subsection{Idea of proof}

Our strategy for establishing Theorems \ref{thm1} and \ref{thm2} is inspired by the singular foliation technology coming from the Birman-Menasco papers. There they considered an appropriate essential surface in the complement of the closed braid and studied the singular foliation induced by the braid fibration.

In our setting we start with pairs ${(S,L)}$-splitting sphere and plat presentations of the split link and ${(S_p,K)}$-twice punctured sphere and plat presentations of the composite knot. We then consider the singular foliation induced on $S$, $S_p$ coming from the plat height function. The  singular foliation technology used for proving Theorems \ref{thm1} and \ref{thm2} will have the following salient components: 

(i) A system of level curves corresponding to the regular values of the height function (See \S \ref{level curves}) \\
(ii) Generic singular leaves corresponding to the critical values of the height function. The foliated neighborhood of each singular leaf will form a {\em singular tile}.  These tiles will give a decomposition of $S$ and $S_p$. (See \S \ref{tilingandfoliation}) \\
(iii) The tiling decomposition of $S$ or $S_p$ will imply an Euler characteristic equation (See Lemma \ref{surfaceec}), this equation is used to obtain a combinatorial relation between different types of tiles. (See Lemmas \ref{bla} and \ref{bla_puncture})\\
(iv) The tiling of $S$ or $S_p$ will be ``dual'' to a finite graph that allows us to define a complexity measure on the tiling. (See \S \ref{graphcomplexity}) \\
(v) The local geometric realization of certain tiling patterns will imply the plat admits a pocket move or a flip move. Applying a pocket move, flip move or destabilisation will strictly reduce the complexity measure. 

In \S \ref{tilingandfoliation}, we describe the tiles associated with the foliation, each tile type will contain a single singularity, or in the case of $S_p$, a single singularity or puncture. 

This gives an obvious complexity measure on the pairs ${(S,L)}$, ${(S_p,K)}$, counting the number of singularities of certain types. This is rigorously defined in \S \ref{graphcomplexity}. 

In \S \ref{tilingandfoliation}, we explain how the ``configuration" in the singular foliation corresponds to the plat moves and results in \S \ref{mainresult} detail how plat moves remove these singularities, thus reducing the complexity measure on $S$ or $S_p
$.

\subsection{Contrasting Plats with Closed Braids}

Plats and closed braids are both objects relating braids to links, and, at first glance, appear very similar. But there are indeed considerable differences between these two, which we elucidate below: 

\begin{itemize}

\item \textbf{Non-oriented vs Oriented Links:} The definition of closed braids refers to the winding around of the link around the braid axis, thus inherently lending the link an orientation. Therefore, closed braids are a tool to study {\em oriented link types}. On the other hand, plats have no natural axis, and thus no restriction to being oriented. Thus, plats are a tool to study {\em non-oriented link types}.

\item \textbf{Bridge number vs Braid index:} Plats and closed braids offer us two different complexity measures on links: {\em bridge number} and {\em braid index}. The {\em bridge number} of a link type $\mathcal{L}$ is the smallest integer $n$ such that $\mathcal{L}$ can be represented as a $2n$- plat and, the {\em braid index} of a link type $\mathcal{L}$ is the least number of strings required to express $\mathcal{L}$ as a closed $n$- braid. These two measures of complexity inherently capture different types of information about a link. This can be seen from the fact that the braid index of a link type $\mathcal{L}$ is equal to the minimum number of Seifert circles of $\mathcal{L}$ and the bridge number of $\mathcal{L}$ can alternatively defined to be the minimum numbers of points of minima (or maxima) in any diagram of $\mathcal{L}$. 

\item \textbf{Double Coset Classes vs Conjugacy Classes:} Every link can be expressed as both a closed braid and a plat in infinitely many different ways, which leads to an immediate question: {\em How are any two braids representing the same link type as closed braids (respectively plats) algebraically related to each other?} Markov's theorem (\cite{braidslinksandmcgs}, \cite{bm_markov}) answers this question in the case of closed braids and Birman's result about Stable Equivalence of Plats (\cite{birman_1976}) answers this in the case of plats. Consider the partition of $\mathcal{B}_n$ into conjugacy classes: $\mathcal{B}_n = \bigcup\limits_{\alpha} \mathcal{C}_{\alpha}$. Then, each conjugacy class $\mathcal{C}_{\alpha}$ corresponds to the same knot type, but this correspondence is not one-to-one. To delve into the case of plats, we need to introduce the Hilden subgroup $\mathcal{K}_{2n}$ (\cite{hilden_sbgp}) of $\mathcal{B}_{2n}$. $\mathcal{K}_{2n}$ consists of all those braids which can be concatenated before or after a braid without changing its link type as a plat i.e. the plat given by the braids $b_1 b b_2$ and $b$ correspond to the same knot type where $b_1, b_2 \in \mathcal{K}_{2n}$. Thus, $\mathcal{B}_{2n}$ is partitioned into double cosets modulo $\mathcal{K}_{2n}$ on both sides ($\mathcal{B}_{2n} = \bigcup_{b \in \mathcal{B}_{2n}} \mathcal{K}_{2n} b \mathcal{K}_{2n}$) such that each double coset corresponds to the same knot type, and again, this correspondence is not one-to-one.

\end{itemize}

\subsection{Outline of the paper}

In \S \ref{prelims}, we introduce the basic terminology and set up the required machinery. \S \ref{prelims} begins with a description of the system of level curves in \S \ref{level curves}. This is followed by an explanation of moves on plats in \S \ref{platmoves}. Then, we have a detailed description of the different types of singularities that occur on $S$ and $S_p$ (with respect to the plat height function), and how that set up is used to foliate the sphere and construct a tiling of it, in \S \ref{tilingandfoliation}. \S \ref{prelims} concludes with an introduction to the graph corresponding to a sphere foliation and how that is used to define a complexity function on the sphere, in \S \ref{graphcomplexity}. 

Moving on to the next section, \S \ref{mainresult} consists of seven lemmas leading up to the proof of Theorems \ref{thm1} and \ref{thm2}. Lemma \ref{surfaceec} gives the relation between the Euler characteristics of a surface $X$ and subsurfaces $X_i$'s such that the $X_i$'s are glued together to give $X$. We use this formula to obtain a combinatorial relation between different types of tiles in Lemma \ref{bla}, pertaining to $S$ and Lemma \ref{bla_puncture}, which pertains to $S_p$. Lemma \ref{bla} is crucial in helping us understand the constraints on how different singularities on $S$ and $S_p$ can grow with respect to each other. It also helps us identify some key characteristics of the graph associated with the foliation of the spheres. This leads us to Lemma \ref{lemma5} which tells us how to start eliminating singularities from the spheres, using the language of graph theory. In particular, in Lemma \ref{lemma5}, we give conditions that a vertex in the graph needs to satisfy so that the singularity it represents can be removed using one of the moves in Theorems \ref{thm1} or \ref{thm2}, depending on the context, giving us a plat with a strictly smaller value of the complexity function. In Lemma \ref{lemma6}, we prove that a vertex satisfying conditions of Lemma \ref{lemma5} always exists, if there are any singularities of a certain type on the sphere. 

The rest of \S \ref{mainresult} is dedicated to proving the statement of Theorem \ref{thm1}. \S \ref{mainresult} concludes with Lemma \ref{remove_punctured_tiles}, which pertains only to $S_p$. It describes how to deal with the special tile $T_p$ (containing a puncture) in different scenarios.

\section*{Acknowledgements} The authors would like to thank Seth Hovland, Greg Vinal and Hong Chang for their many discussions on plat presentations.


\begin{figure}[ht!]
\labellist
\small\hair 2pt
\pinlabel {$n$-bridges} at 208 460
\pinlabel {$n$-bridges} at 205 22
\pinlabel {$\textbf{B}$} at 204 235
\pinlabel {$2n$-strands} at 450 298
\endlabellist
\centering
\includegraphics[width=9cm, height=7cm]{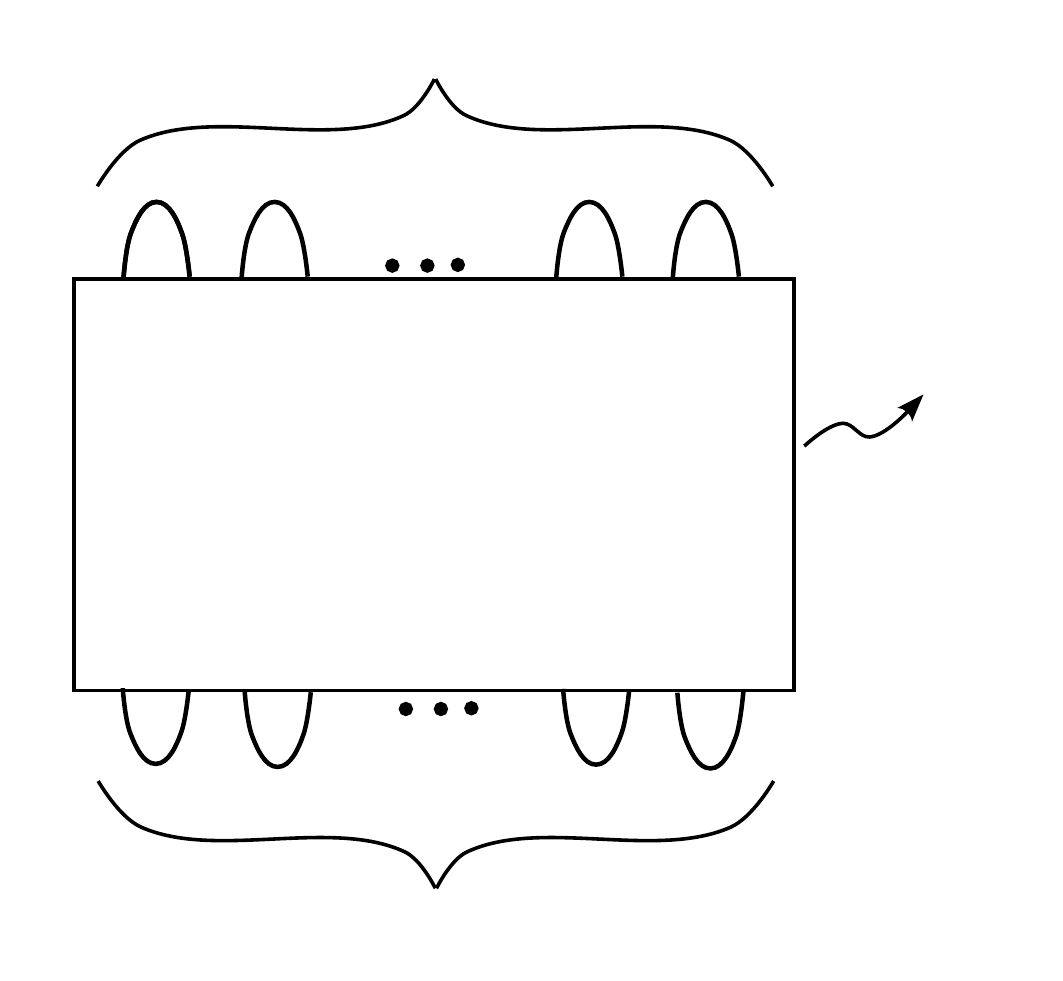}
\caption{A generic 2n-plat}
\label{plat}
\end{figure}

\section{\centering Preliminaries}

\label{prelims}





Starting with plat presentations for non-oriented links, we give a brief description of what they are. Consider the classical braid group on $2n$ strands, $\mathcal{B}_{2n}$. Then, for an element of $\mathcal{B}_{2n}$ i.e. a braid $B$ on $2n$ strands, we cap-off the $i$ and $i+1$ strands, for odd $i$ between $1$ and $2n-1$, at the top (bottom) of the braid with $n$ northern (southern) circle hemispheres, refer Figure \ref{plat}. This gives us a link, and this presentation of the link is called a {\em plat presentation}. Call this the {\em plat defined by $B$}. Then $n$ is the {\em bridge index} of the presentation. We thus have a height function on the presentation that has $n$ maxima (minima) points all at the same level 
above (below) the braid. (We will employ this height function extensively in the arguments of this note). One can jump/isotope from one plat presentation to another through braid isotopies, which again are Reidemeister type-II \& -III moves placed in this setting. Braid isotopies do not change the bridge index. An isotopy by a stabilization (destabilization) adds (removes) two strands in the braid along with a maxima and minima, which is the Reidemeister type-I move placed in the plat setting. These isotopies are shown in Figures \ref{braidisotopy_one}, \ref{braidisotopy_two} and \ref{stab}.

Birman showed in \cite{birman_1976} that any link type can be represented as a plat. It turns out that this representation is highly non unique and every link type has {\em infinitely many} distinct plat presentations. By ``distinct plat presentations'', we mean that the braids corresponding to these plat presentations correspond to different elements in the braid group. In the same paper, Birman proved the following result, which tells us how any two braids, whose plats represent the same knot type, are algebraically related to one another:

\begin{namedthm*}{Birman's Result}

Let $\mbf{L_i}$, i =1, 2, be tame knots. Choose elements $\phi_i \in \mathcal{B}_{2n_i}$ in such a way that the plat defined by $\phi_i$ corresponds to the same knot type as $\mbf{L_i}$. Then, $\mbf{L_1} \approx \mbf{L_2} $ if and only if there exists an integer $t \geq$ max$(n_1, n_2)$ such that, for each $n \geq t$, the elements 
$$\phi_{i}^{'} = \phi_i \sigma_{2n_i} \sigma_{2n_i +2} ... \sigma_{2n-2} \in \mathcal{B}_{2n}, i = 1, 2$$
are in the same double coset of $\mathcal{B}_{2n}$ modulo the subgroup $\mathcal{K}_{2n}$.

\end{namedthm*}

Birman's Result can be restated as follows: 

\begin{namedthm*}{\textbf{Stable equivalence of plat presentations}}

Let $K$ be a $2n$-plat presentation of $\mbf{K}$, and $K'$ be a $2n^{\prime}$-plat presentation of $\mbf{K}$. Then, there is a finite sequence of plat presentations of $\mbf{K}$: 
$$ K = K_{0} \longrightarrow K_{1} \longrightarrow K_{2} \longrightarrow ... \longrightarrow K_{m} = K' $$
such that $K_{i+1}$ is obtained from $K_{i}$ via the following moves:
\begin{itemize}
\item[(i)] braid isotopies;
\item[(ii)] double coset moves;
\item[(iii)] stabilization or destabilization.
\end{itemize}
\end{namedthm*}

\begin{figure}[ht!]
\centering
\labellist
\small\hair 2pt
\pinlabel {R II} at 59 108
\pinlabel {R II} at 307 108
\endlabellist
\centering
\includegraphics[width=10cm, height=2.5cm]{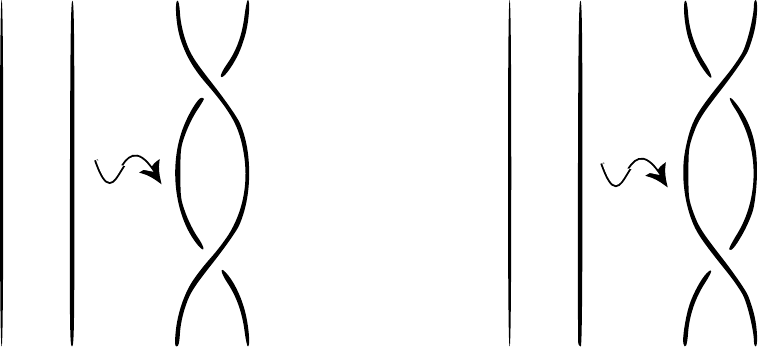}
\caption{Braid isotopy corresponding to Reidemeister II move}
\label{braidisotopy_one}
\end{figure}

\begin{figure}[ht!]
\centering
\labellist
\small\hair 2pt
\pinlabel {R III} at 125 110
\endlabellist
\centering
\includegraphics[width=6cm, height=2.5cm]{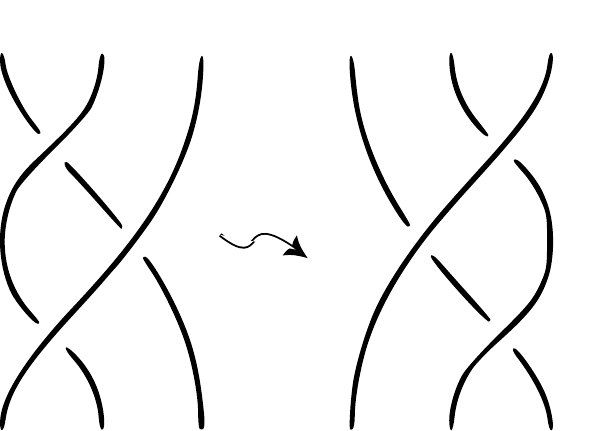}
\caption{Braid isotopy corresponding to Reidemeister III move}
\label{braidisotopy_two}
\end{figure}

\begin{figure}[ht!]
\centering
\labellist
\small\hair 2pt
\pinlabel {$\mbf{B}$} at 120 127
\pinlabel {$\mbf{B}$} at 428 127 
\pinlabel {$\sigma_{2n}$} at 529 26
\pinlabel {STABILIZATION} at 282 146
\pinlabel {DESTABILIZATION} at 282 100
\endlabellist
\centering
\includegraphics[width=18.5cm, height=7cm]{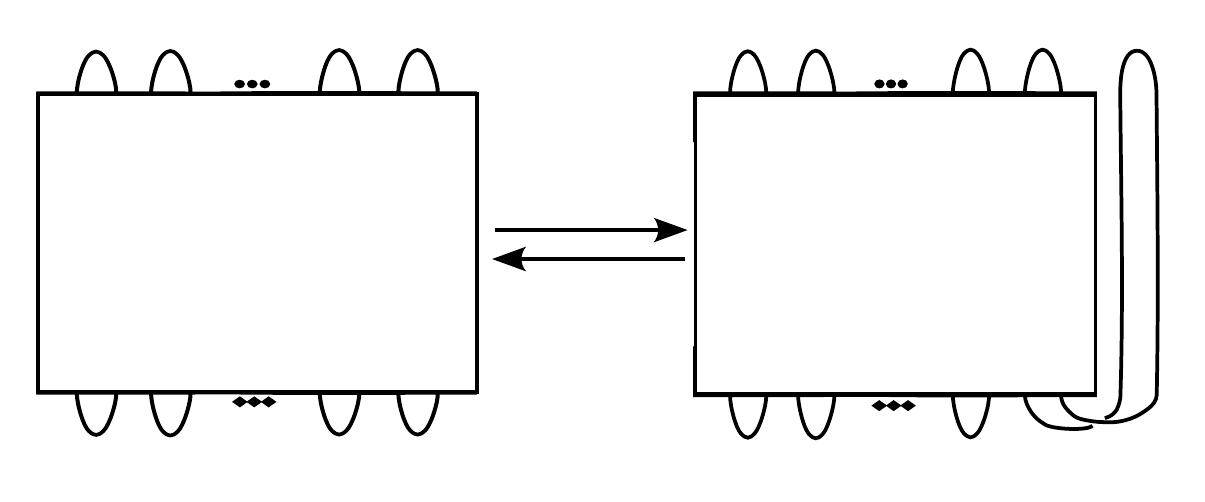}
\caption{Stabilizing and Destabilizing a Plat}
\label{stab}
\end{figure}


Let $L_i$ be a plat presentation of $\mathcal{L}$, from the sequences in Theorems \ref{thm1} and \ref{thm2}, and let $S_i$ be a geometric realisation of $S$ or $S_p$ corresponding to $L_i$, depending on whether $L_i$ is split or composite. We assign a height function $h$ to $S_i$ such that $h$ is a Morse function, regarded as the projection function from $S_i$ onto $\mbb{R}$ (think of this as the natural height function $h: \mbb{R}^3 \lra \mbb{R}$. We position $S_i$ in $\mbb{R}^3$ so that $h$ takes values on $S_i$ varying between $-0.25$ and $1.25$ such that, the points of maxima of the top bridges are at height $h = 1$ and the points of minima of the bottom bridges are at height $h=0$. With this setup, thinking of $\mbb{R}^3$ as $\mbb{R}^2 \times \mbb{R}$, then $\mbb{R}^2 \times \{*\}$ are the level planes for $h$, say $P_t = h^{-1}(t) =  \mbb{R}^2 \times \{t\}$, for $t \in [-0.25, 1.25]$.

\subsection{System of Level Curves} \label{level curves}

Define $C_t^i := S_i \cap P_t$. Then, $C_t^i$ is the set of level curves of $S_i$ at level $t$, which is basically a cross section of $S$ or $S_p$ at height $t$. Note that, if $L_i$ is a plat of index $n$, then, for $ t \in (0,1)$, $C_t^i$ consists of $m_t^i$ closed curves, say. Call these closed curves $s_{i,t}^j$, for $j \in \{1,2, ..., m_t^i\}$. We position $S_i$ so that any points of local maxima or minima occur for $t$-values {\em not} in $(0,1)$. Also notice that, for $t \in (0,1)$, the link/knot intersects $P_t$ in exactly $2n$ points. If, for any $i,j,t$, $s_{i,t}^j$ does not contain a part of the link/knot or another curve inside of it (i.e. $s_{i,t}^j$ is empty, which means that it does not contain points or other curves), we surger it off. This means that, for any $i$ and any $t \in [-0.25, 1.25]$, the only closed curves that remain on $S_i$ are the ones that contain a part of the link/knot or another simple closed curve. Further, notice that, for all but finitely many values of $t$, the closed curves $s_{i,t}^j$ are all simple closed curves. The values of $t$ for which we get intersection (which includes self-intersection) between closed curves are the values where the sphere $S_i$ has a saddle. We position the sphere so that no value of $t$ has more than one saddle.

\subsection{Moves on plats} \label{platmoves}

\subsubsection{Braid isotopies} \label{braid_isotopies}

Braid isotopies are moves on plats where the top and bottom bridges (i.e. the local max's and min's) are fixed. Algebraically, this is equivalent to changing the braid word corresponding to the plat using the relators in the braid group. So the end points of the strings are all fixed and with that constraint, any manipulation of strings is allowed as long as the resulting object is still a plat. This is shown in Figure~\ref{braidisotopy_one} and Figure~\ref{braidisotopy_two}.




\subsubsection{Double coset moves and the Pocket move}

\label{dcms}

Double coset moves on plats permute the bridges in different ways, thus giving freedom of movement to the bridges. For the braid group $\mathcal{B}_{2n}$, a double coset move on the plat defined by $\sigma \in \mathcal{B}_{2n}$ is one which takes this plat to the plat defined by $\alpha \sigma \beta$  where $\alpha$ and $\beta$ are elements in $\mathcal{K}_{2n}$, the Hilden subgroup (\cite{hilden_sbgp}) of the braid group, which has the following generating set: 
$\{ \sigma_{1}, \sigma_{2} {\sigma_{1}}^{2} \sigma_{2}, \sigma_{2i} \sigma_{2i-1} \sigma_{2i+1} \sigma_{2i}, 1 \leq i \leq n-1 \}$. The relators for $\mathcal{K}_{2n}$ were computed by Stephen Tawn, in \cite{tawn2007presentation}. 

For $n=2$, the generators of the Hilden subgroup are: $\{ \sigma_{1}, \sigma_{2} {\sigma_{1}}^{2} \sigma_{2}, \sigma_{2} \sigma_{1} \sigma_{3} \sigma_{2} \}$. The plat moves on the bottom bridges coming from these generators are shown in Figure~\ref{hildengens}. 

Note that the double coset moves essentially try to capture all those moves, not covered by the scope of braid isotopies and stabilization/destabilization, which take plats to plats while preserving the link type. Figure~\ref{hildengens} is a good motivating example to observe this phenomenon. Notice that all the moves in Figure~\ref{hildengens}, change the braid word but not the knot type.

Birman, in \cite{birman_1976}, does not use the nomenclature, ``double coset moves". We are using it here, because, without this class of moves, given two plat presentations $K_1$ and $K_2$ of a knot, using \textbf{only} braid isotopies and stabilization/destabilization, the best we can do is get $K_1$ and $K_2$ to be in the same double coset of $\mathcal{B}_{2n}$ modulo $\mathcal{K}_{2n}$. 

\begin{figure}[ht!]
\centering
\labellist
\small\hair 2pt
\endlabellist
\centering
\includegraphics[width=12.5cm, height=3.5cm]{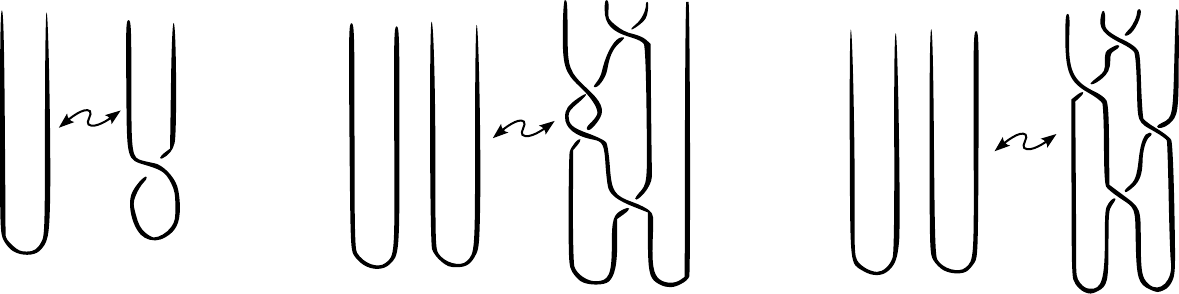}
\caption{Generators of $\mathcal{K}_4$}
\label{hildengens}
\end{figure}

\begin{figure}[ht!]
\centering
\labellist
\small\hair 2pt
\pinlabel {A} at 224 402
\pinlabel {B} at 224 118
\pinlabel {A} at 585 402
\pinlabel {B} at 585 118
\endlabellist
\centering
\includegraphics[width=15cm, height=8.3cm]{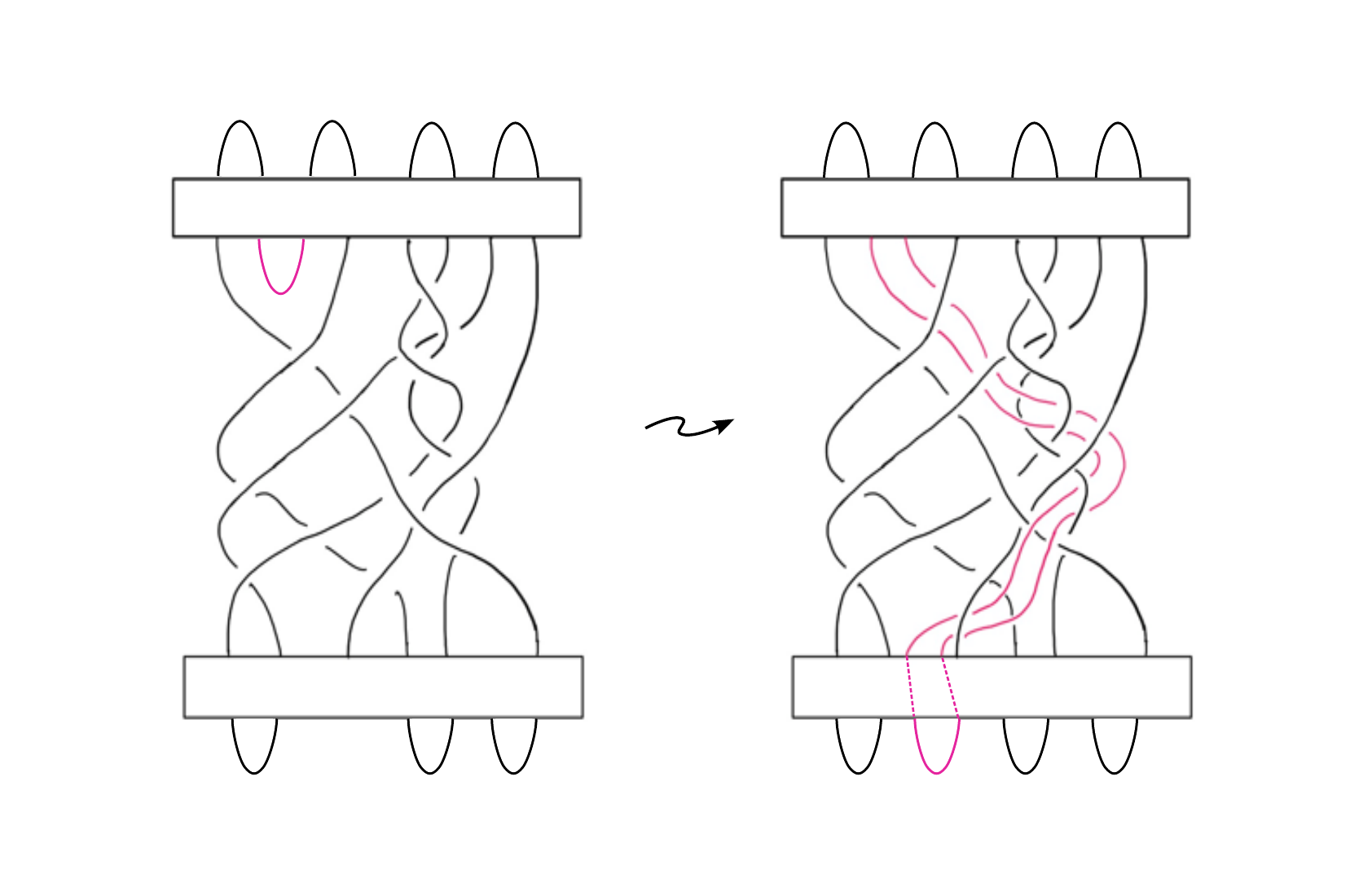}
\caption{Pocket move on an 8-plat}
\label{pocket_general}
\end{figure}

\begin{figure}[ht!]
\centering
\labellist
\small\hair 2pt
\pinlabel {A} at 153 361
\pinlabel {B} at 143 85
\pinlabel {A} at 444 361
\pinlabel {B} at 458 85
\pinlabel {C} at 118 313
\pinlabel {C} at 423 39
\endlabellist
\centering
\includegraphics[width=15cm, height=8.5cm]{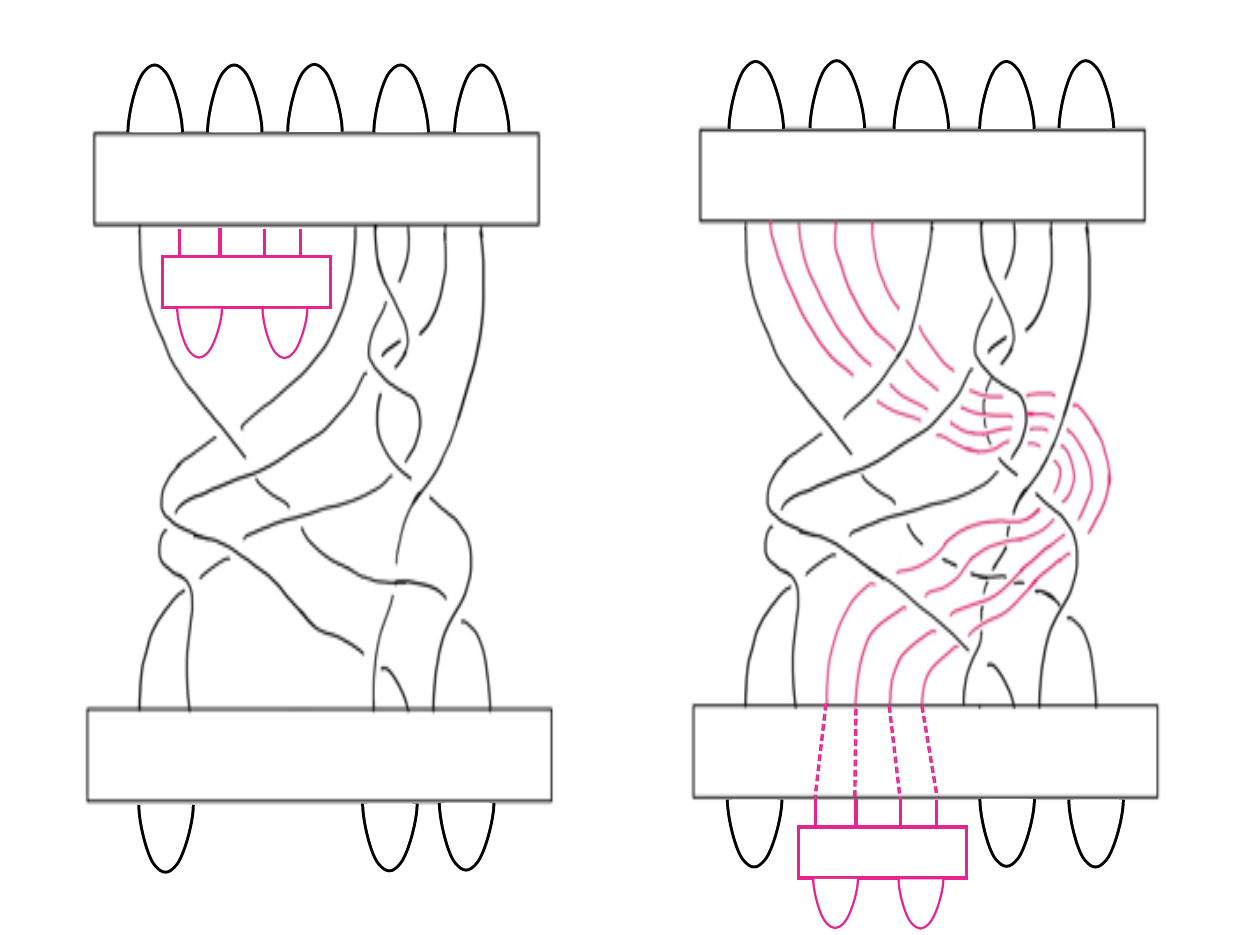}
\caption{Generalised pocket move on a higher index plat}
\label{block_pocket}
\end{figure}

Figures \ref{pocket_general} and \ref{block_pocket} depict a generic pocket move. Consider a plat presentation $L$ where a top or a bottom bridge which is truncated at a level strictly between the top and the bottom levels of the plat (as shown in the Figures), say $t = t_0$, where $0 < t_0 < 1$. Now, define a smooth path $\gamma$, $\gamma: [0,1] \lra \mbb{R}^3 \ L$, such that the plat height function is monotonic on $\gamma$ and we have that $h^{-1}(\gamma(0)) = t_0$ and $h^{-1}(\gamma(1)) = 1$, in case we are doing a pocket move with a bottom bridge, or $h^{-1}(\gamma(1)) = 0$, if we are doing a pocket move with a top bridge. Now, we take the local max or min corresponding to the bridge that we picked and drag it along the path $\gamma$. This results in the bridge weaving around the rest of the plat, as shown in Figure \ref{pocket_general}. Further, we can also twist the bridge as it follows the path $\gamma$, and we can do this process for any top or bottom bridge that can be truncated prematurely, with each bridge following a potentially different path. In \cite{unlinkviaplats}, we prove the following Lemma:

\begin{lemma}
   
Pocket moves can be realised as a combination of braid isotopies and a sequence of double coset moves.

\label{pmisdcm}
\end{lemma}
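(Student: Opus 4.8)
The plan is to realize the pocket move as a concatenation of local moves by discretizing the defining isotopy, and then to match each local move with either a braid isotopy or a multiplication by a generator of the Hilden subgroup $\mathcal{K}_{2n}$. I would argue throughout for a bottom bridge dragged up to level $1$; the top-bridge case is then immediate by reflecting the height function $h \mapsto 1-h$. Since dragging the chosen minimum along the monotone path $\gamma$ is an ambient isotopy of $\mbb{R}^3$ supported in a regular neighborhood of $\gamma$, the link type $\mathcal{L}$ is automatically preserved; the real content of the lemma is the stronger assertion that this isotopy factors through plat presentations of \emph{constant bridge index} using only braid isotopies and double coset moves, with no stabilization required.

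First I would subdivide $\gamma$ into finitely many subarcs $\gamma = \gamma_1 \gamma_2 \cdots \gamma_r$, chosen by general position for the generic path so that across each $\gamma_j$ the moving bridge executes exactly one elementary event: either (a) it slides through the interior of the braid box $\mbf{B}$ past a single crossing or strand without meeting a cap, or (b) it passes a single top or bottom cap, or (c) it performs one elementary twist about its own axis. It then suffices to realize each elementary event by a braid isotopy or a double coset move, since their composition recovers the full move and such a composition is again of the required form; moreover the bridge index $n$ is preserved step by step because each of these elementary moves preserves it.

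Next I would match events to moves, referring to the geometry of Figure~\ref{pocket_general}. An event of type (a) holds every cap endpoint fixed and only rearranges strands inside $\mbf{B}$, so it changes $W(L)$ by a braid relation, i.e. by a Reidemeister~II or III move, and is therefore a braid isotopy in the sense of \S\ref{braid_isotopies}. An event of type (b), passing the moving bridge across an adjacent cap, is exactly the interchange of two neighboring bridges, which is realized by the Hilden generator $\sigma_{2i}\sigma_{2i-1}\sigma_{2i+1}\sigma_{2i}$; absorbing this generator into the bottom (or top) cap system is left (or right) multiplication by an element of $\mathcal{K}_{2n}$, i.e. a double coset move. An event of type (c), a twist of the moving bridge about its axis, is realized by a power of $\sigma_1$ carried to the relevant bridge by the bridge-interchange generators, and since $\sigma_1 \in \mathcal{K}_{2n}$ this too is a double coset move.

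Finally I would reassemble the move: the initial truncation that lifts the minimum from level $0$ to $t_0$ and the terminal positioning of the minimum near level $1$ are braid isotopies that reposition the bridge without crossing any cap, so the entire pocket move is the advertised finite sequence of braid isotopies and double coset moves. The step I expect to be the main obstacle is the type-(b) analysis: one must verify that each passage of the moving bridge past a cap genuinely factors as a one-sided multiplication by the \emph{standard} generators $\sigma_{2i}\sigma_{2i-1}\sigma_{2i+1}\sigma_{2i}$, rather than only landing in the double coset after further manipulation, and that the over/under choices together with the twisting of type~(c) are tracked consistently so that the accumulated word is exactly $\alpha\, W(L)\,\beta$ with $\alpha,\beta \in \mathcal{K}_{2n}$ up to braid isotopy.
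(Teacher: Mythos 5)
You should first note a structural point: this paper never proves Lemma~\ref{pmisdcm} at all --- it is imported verbatim, with the remark ``In \cite{unlinkviaplats}, we prove the following Lemma,'' and Corollary~\ref{corollary: double cosets} cites it as Lemma~3 of \cite{unlinkviaplats}. So there is no in-paper proof to match your discretization scheme against, and your attempt must stand on its own. As written it does not, and the central gap is your type~(a) classification. In the sense this paper uses (\S\ref{braid_isotopies}), a braid isotopy changes $W(L)$ only by the relators of $\mathcal{B}_{2n}$, i.e.\ it fixes the underlying group element; but when the dragged extremum sweeps across a strand of the plat it creates a bigon between that strand and the two parallel strands of the pocket --- two new coherent crossings on three strands, inserting a subword of the form $\sigma_i^{\epsilon}\sigma_{i\pm 1}^{\epsilon}$, which is not a consequence of the relators and genuinely changes the element of $\mathcal{B}_{2n}$. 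Worse, at every intermediate stage the moving bridge is non-monotone with an interior critical point, so the intermediate object is not a plat and ``$W(L)$ changes by a braid relation'' is not even well-formed without a renormalization you never specify. The effect is that essentially all of the weaving --- the entire content of the pocket move --- is misfiled under ``braid isotopy,'' while the real difficulty is left untouched: the non-trivial insertions occur at interior heights, interleaved with the existing word as $W_1\, g\, W_2$, and one must show (using the monotonicity of $\gamma$, e.g.\ by combing the pocket band to the bottom or top of the braid box) that the accumulated word equals $\alpha\, W(L)\, \beta$ with $\alpha, \beta \in \mathcal{K}_{2n}$ after braid relations. Your closing sentence explicitly names this as ``the main obstacle'' and then does not resolve it; that deferred step is the lemma.

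There are secondary problems with the event-to-generator dictionary even granting the discretization. Your matching uses only the interchange generators $\sigma_{2i}\sigma_{2i-1}\sigma_{2i+1}\sigma_{2i}$ and twists built from $\sigma_1$, but the generating set of $\mathcal{K}_{2n}$ given in \S\ref{dcms} also requires $\sigma_2\sigma_1^2\sigma_2$, which corresponds to the moving bridge threading between (or encircling one strand of) another cap rather than passing around it as a whole; such passages occur for generic $\gamma$ and are not products of your two event types alone. Moreover, for a bottom bridge the path $\gamma$ terminates at level $1$, so the terminal events are interactions of a minimum with the \emph{top} caps; an encounter between a min and the maxima is not ``the interchange of two neighboring bridges'' and is not covered by your type~(b) analysis on either side of the double coset. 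So even the bookkeeping that you do carry out is incomplete, independently of the unaddressed factorization problem.
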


This lemma shows us the geometric scope of the pocket move.

\subsubsection{The flip move}

\label{flipmove}

Consider the unknot positioned in 3-space given by the following equation: $K = \{(x,y,z) : x^2 + \frac{y^2}{4} = 1, z = 0\}$. Figure ~\ref{flip} shows the \textbf{flip move on the index one unknot diagram}, which is the following isotopy (or its inverse): take point B ($(0,-2,0)$) and move it along the following path: 
$$ x = 0, y = \cos(\theta) -1, z = \sin(\theta)$$ where $ \pi \leq \theta \leq 3\pi$. 


This path traces out the circle of radius one centered at $(0,-1,0)$, in the $y-z$ plane, moving clockwise if viewed from the region $x \geq 0$. The effect of this isotopy on the disc is shown in Figure~\ref{flip}.

\begin{figure}[ht!]
\centering
\labellist
\small\hair 2pt
\pinlabel $A$ at 132 368 
\pinlabel $A$ at 221 385
\pinlabel $A$ at 388 376
\pinlabel $A$ at 531 391
\pinlabel $B$ at 136 124
\pinlabel $B$ at 225 149
\pinlabel $B$ at 402 184
\pinlabel $B$ at 519 212
\pinlabel {saddle} at 538 279
\pinlabel {pt of minima} at 386 127
\pinlabel {pt of minima} at 524 162
\endlabellist
\centering
\includegraphics[scale=0.65]{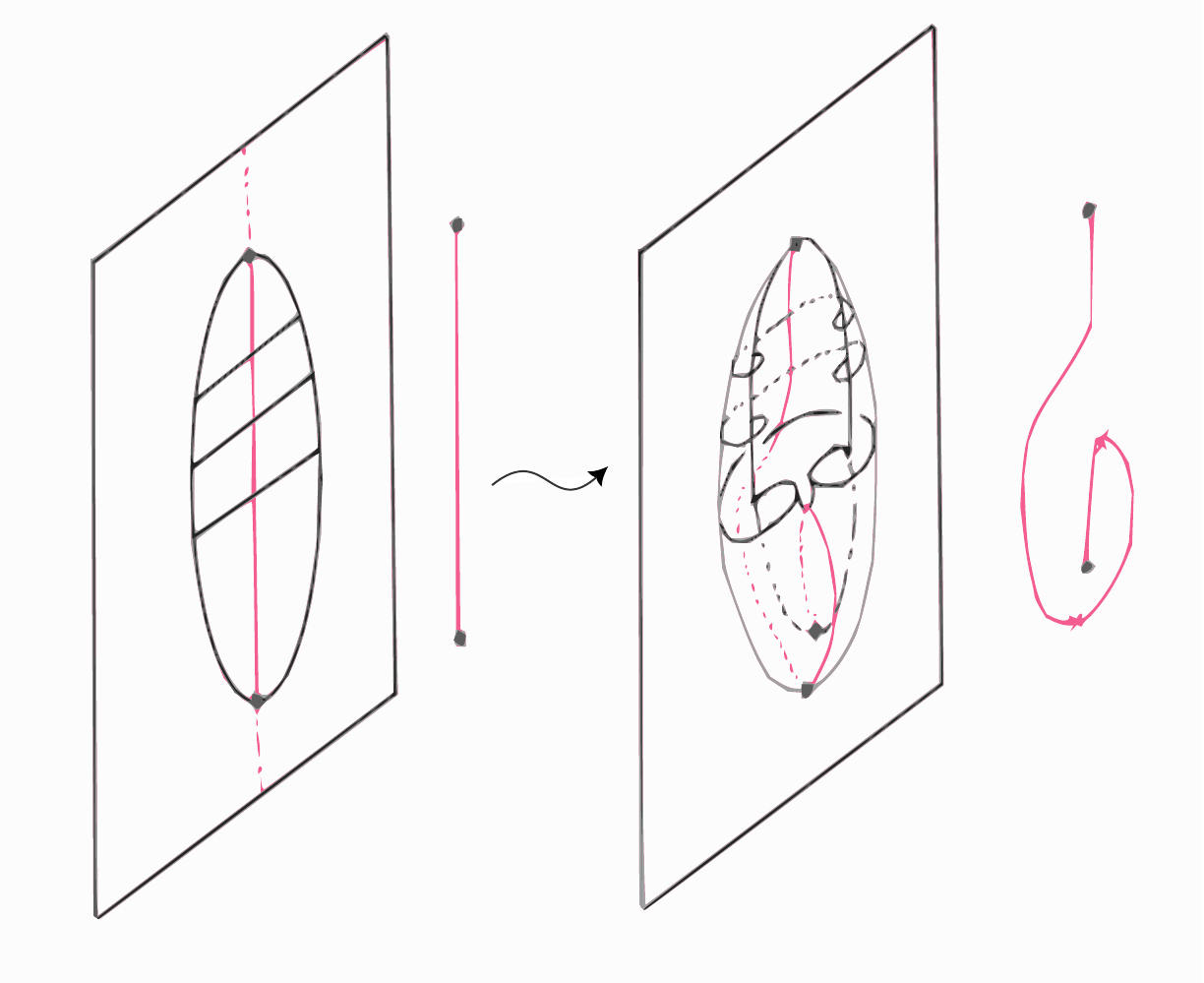}
\caption{The flip move on the 1-bridge 0-crossing diagram}
\label{flip}
\end{figure}

\begin{figure}[ht!]
\centering
\labellist
\small\hair 2pt
\pinlabel $A$ at 154 311 
\pinlabel $A$ at 472 311
\pinlabel $B$ at 157 102
\pinlabel $B$ at 474 102
\endlabellist
\centering
\includegraphics[width=12cm, height=6.5cm]{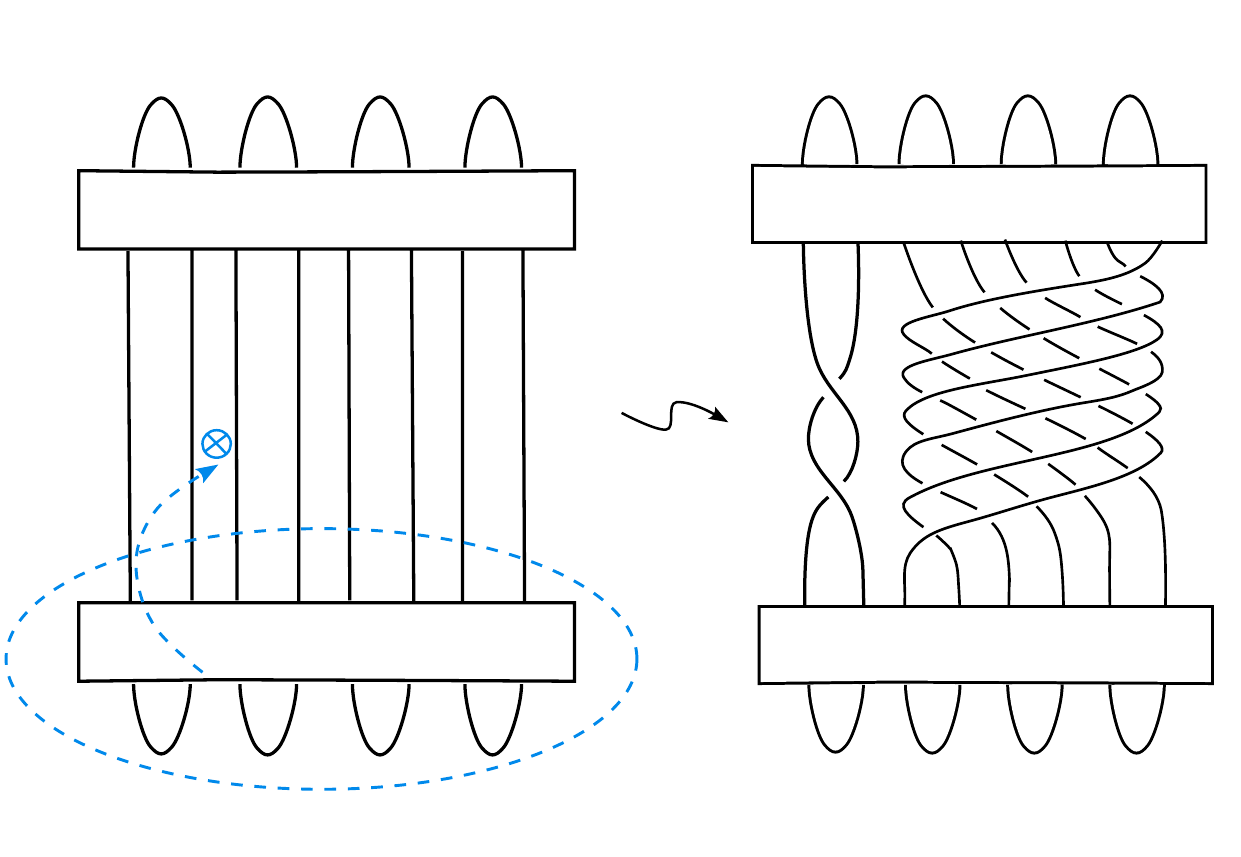}
\caption{The flip move on an 8 plat}
\label{flip_8}
\end{figure}

On a generic $2n$-plat with braid word $\mathbf{AB}$, here's how the flip move changes the braid word: \\
Case (i): Flipping $\mathbf{B}$ into the plane of the paper between the $k$-th and $(k+1)$-st strands: 
$$ \mathbf{AB} \lra \mathbf{A} {(\sigma_1 \sigma_2...\sigma_{k-1})}^{k} {((\sigma_{2n-1})^{-1}...(\sigma_{k+2})^{-1}(\sigma_{k+1})^{-1})}^{2n-k} \mathbf{B} $$ 
Case (ii): Flipping $\mathbf{B}$ out of the plane of the paper between the $k$-th and $(k+1)$-st strands: 
$$ \mathbf{AB} \lra \mathbf{A} {((\sigma_{k-1})^{-1}(\sigma_{k-2})^{-1}...(\sigma_{1})^{-1})}^{k} {(\sigma_{k+1} \sigma_{k+2}... \sigma_{2n-1})}^{2n-k} \mathbf{B} $$ 
Case (iii): Flipping $\mathbf{B}$ into the plane of the paper between the first two strands: 
$$ \mathbf{AB} \lra \mathbf{A} {((\sigma_{2n-1})^{-1} (\sigma_{2n-2})^{-1}... (\sigma_{3})^{-1} (\sigma_{2})^{-1})}^{2n-1} \mbf{B} $$ 
Case (iv): Flipping $\mbf{B}$ out of the plane of the paper between the first two strands: 
 $$ \mbf{AB} \lra \mbf{A} {(\sigma_2 \sigma_3... \sigma_{2n-2} \sigma_{2n-1})}^{2n-1} \mbf{B} $$
Case (v): Flipping $\mbf{B}$ into the plane of the paper between the last two strands: $$ \mbf{AB} \lra \mbf{A} {(\sigma_1 \sigma_2 ... \sigma_{2n-3} \sigma_{2n-2})}^{2n-1} \mbf{B} $$
Case (vi): Flipping $\mbf{B}$ out of the plane of the paper between the last two strands: 
$$ \mbf{AB} \lra \mbf{A} {((\sigma_{2n-2})^{-1} (\sigma_{2n-3})^{-1} ... (\sigma_{2})^{-1} (\sigma_{1})^{-1})}^{2n-1} \mbf{B} $$ 

Further, we define the \textbf{micro flip move}, wherein only $k$ (where $k$ is even) out of the $n$ strands are flipped. A pictorial representation is given in Figure~\ref{microflip}.

\begin{figure}[ht!]
\centering
\labellist
\small\hair 2pt
\pinlabel $A$ at 134 283
\pinlabel $A$ at 475 282
\pinlabel $B$ at 82 85
\pinlabel $B$ at 429 84
\endlabellist
\centering
\includegraphics[width=13.5cm, height=6cm]{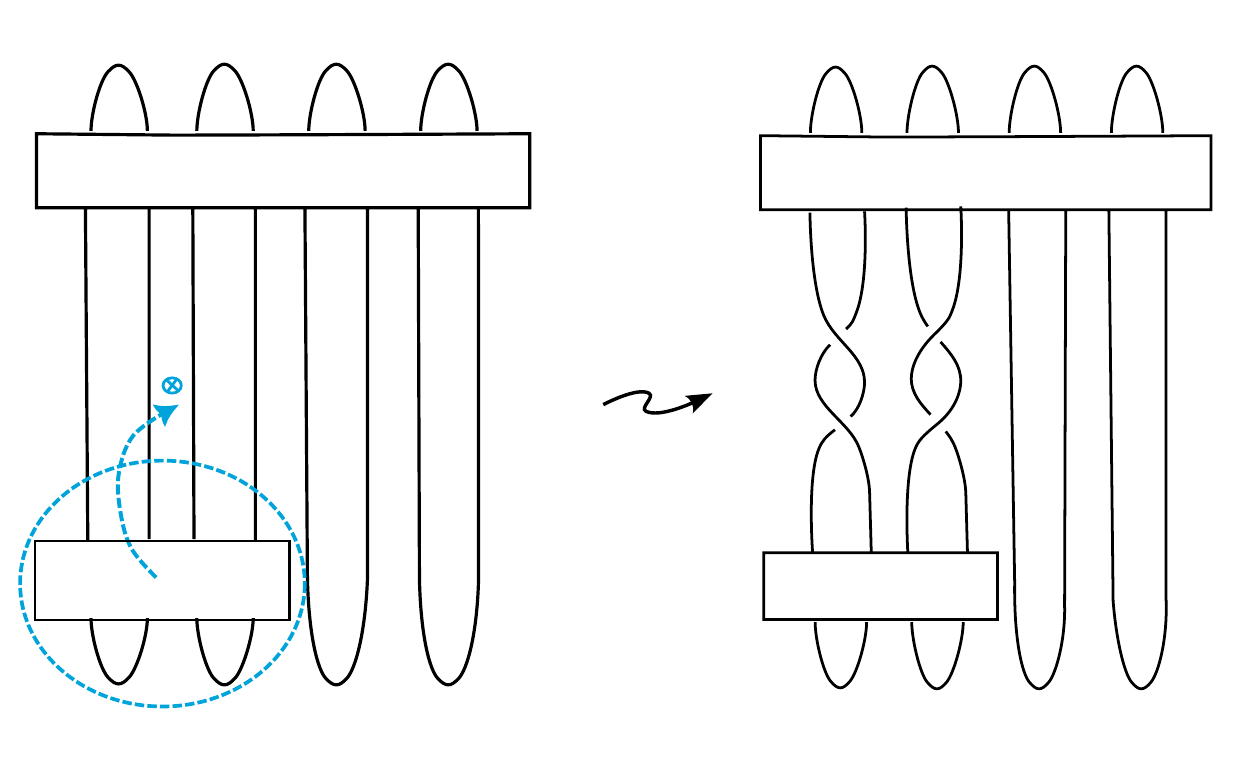}
\caption{An example of a Microflip move}
\label{microflip}
\end{figure}

Corresponding to the aforementioned cases, we also have six more cases where we do the flip with the top bridges. The change in braid word can be computed as shown above.
Figure~\ref{stab_seq} shows the corresponding sequence of moves using only braid isotopies, double coset moves and stabilization/destabilization, to go from the plat on the right to the plat on the left in Figure~\ref{flip_8}. Repeating the same sequence of moves on the first strand (from the left) in the last plat in Figure~\ref{stab_seq}, we get the left plat in the first row in Figure~\ref{garside_slide}. This plat is now defined by the braid $A \delta B$, where $\delta$ is the full twist in the braid group $\mathcal{B}_{2n}$, which generates the center of the braid group. Therefore, via braid isotopies, we can slide the braid word $B$ across the full twist to get the second plat, defined by $AB\delta$, in Figure~\ref{garside_slide}. Now, we can see that this plat can be resolved via double coset moves to give us the plat defined by $AB$, that we wanted to get to. 

The flip move allows us to turn different strands of the braid both ways simultaneously, which we can not achieve with just the double coset moves.


\begin{figure}[ht!]
\centering
\labellist
\small\hair 2pt
\pinlabel $A$ at 120 513
\pinlabel $A$ at 361 515
\pinlabel $A$ at 603 515
\pinlabel $A$ at 106 233
\pinlabel $A$ at 357 233
\pinlabel $A$ at 624 233
\pinlabel $B$ at 120 358
\pinlabel $B$ at 361 356
\pinlabel $B$ at 603 357
\pinlabel $B$ at 106 74
\pinlabel $B$ at 357 75
\pinlabel $B$ at 624 73
\endlabellist
\centering
\includegraphics[width=16cm, height=13cm]{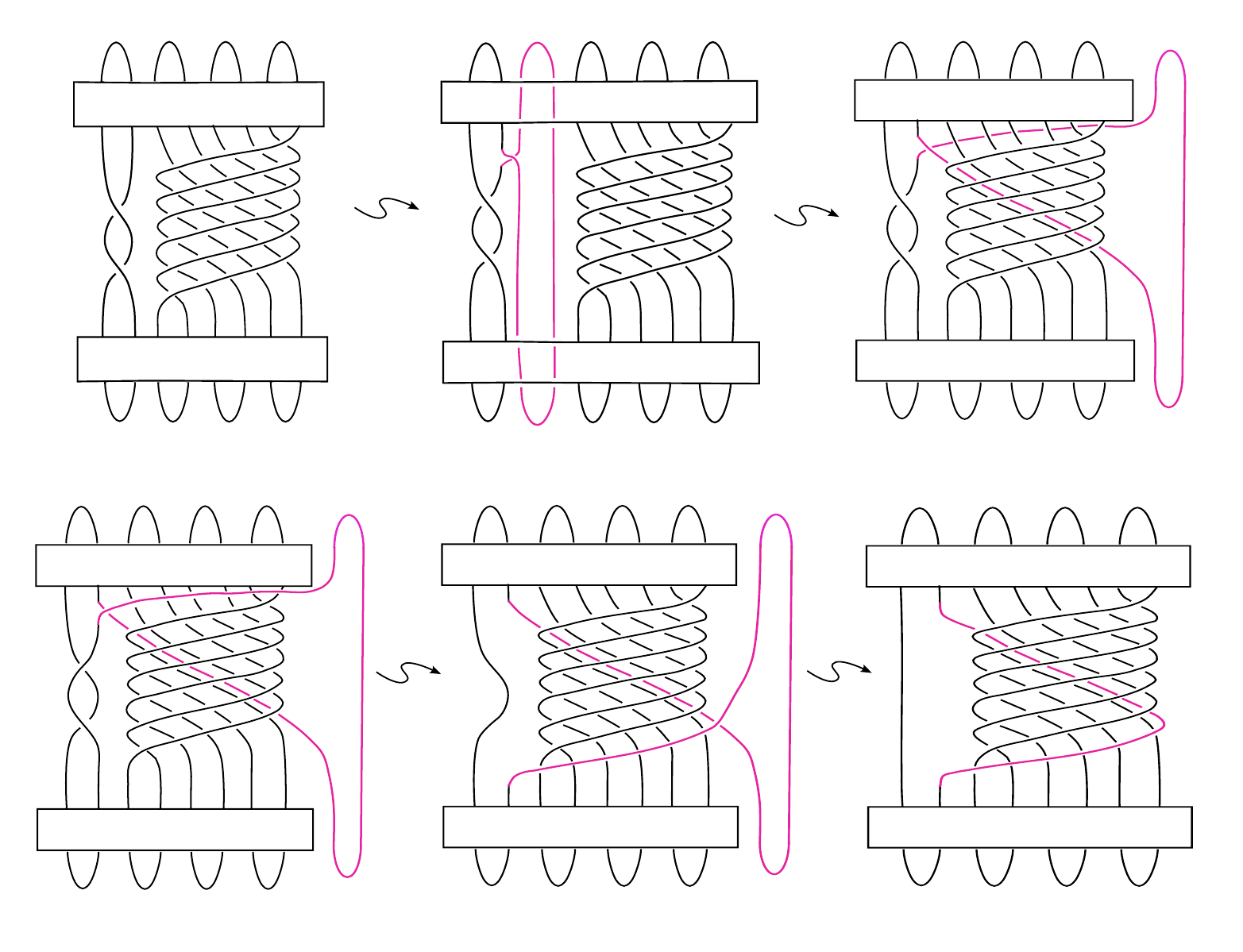}
\caption{Stabilization sequence for the flip move}
\label{stab_seq}
\end{figure}

\begin{figure}[ht!]
\centering
\labellist
\small\hair 2pt
\pinlabel $A$ at 139 683
\pinlabel $A$ at 296 292
\pinlabel $B$ at 140 457
\pinlabel $B$ at 293 95
\pinlabel $AB$ at 436 683
\endlabellist
\centering
\includegraphics[width=14.5cm, height=14.5cm]{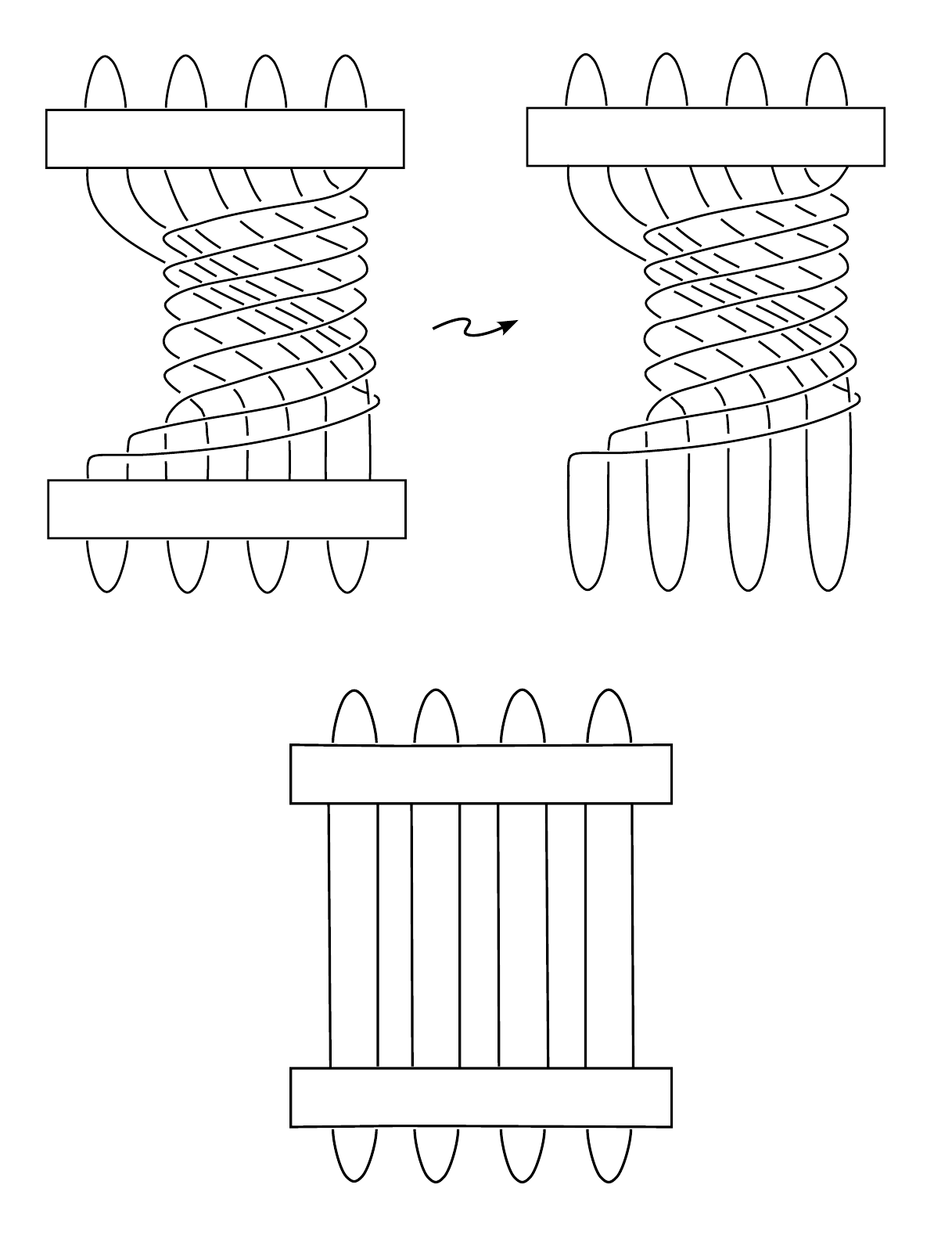}
\caption{Using double coset moves to undo the `turning' of a braid}
\label{garside_slide}
\end{figure}

\subsection{A special configuration of plats} \label{speial_plats}

Figure \ref{std_composite} depicts a special configuration that plats can always be put into. The aim of employing this particular configuration of plats is to have either the rightmost or the leftmost string of the plat devoid of any crossings. We want that so that we can remove singularities from min or max tiles which are attached to a punctured tile. This is explained in detail in Lemma \ref{remove_punctured_tiles}.

\begin{figure}[h!]
\labellist
\small\hair 2pt
\endlabellist
\centering
\includegraphics[width=12cm, height=6cm]{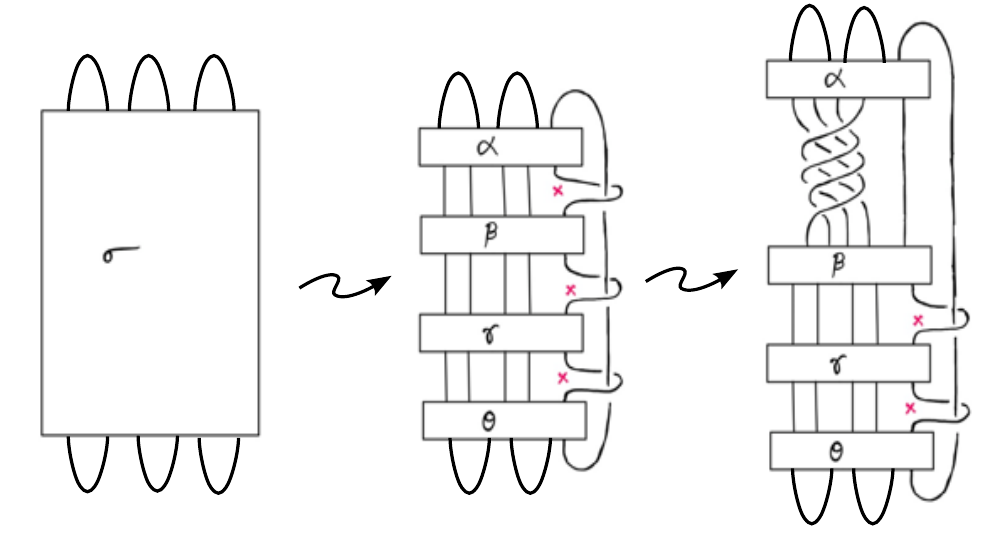}
\caption{A special configuration that every plat can be put into}
\label{std_composite}
\end{figure}

\subsection{Sphere Foliation and Tiling} \label{tilingandfoliation}

As we remarked in \S \ref{level curves}, since the level curves $C_t^i$ are cross sections of $S_i$, taking a union of $C_t^i$'s over all $t$'s gives us the surface $S_i$. Therefore, ${S_i} = \bigsqcup_{t \in [-0.25,1.25]} C_t^i$. This gives a singular foliation of the surface $S_i$, where the leaves of the foliation are the simple closed curves $s_{i,t}^j$'s. The singular leaves are exactly the intersecting (which includes self intersections) curves, of which there are only finitely many. The singular leaves correspond to saddles on the surface. Further, self intersecting closed curves give rise to curves which either self intersect again or degenerate to a single point in the foliation, which corresponds to a local maxima or local minima on $S_i$. As mentioned earlier, these points of maxima are at height $t > 1$ and the minima are at height $t < 0$. 

Using this singular foliation, we will now describe a way to tile $S_i$. We thicken up the singular leaves such that the union of the `neighborhoods' of the singular leaves then determines a tiling of $S_i$. So, a tile $T$ on the sphere $S_i$ is either a thickened up neighborhood of a singular leaf $s$, i.e. a tile $T$ is the surface $s \times I$, where $I$ is the unit interval, or a tile $T$ is a neighborhood of a point of minima or maxima on the sphere. Thus, each tile on the sphere contains exactly one singularity (saddle or local min/max). Abusing notation, for the remainder of the paper, we might refer to a tile and the singularity it contains interchangeably. It will be clear from context what we are referring to. Any tile $T$ has a boundary comprising of circles. A tile with $c$ boundary circles will be denoted as: \bm{$T_c$}. 

Below we give a detailed description of all the possible singularities (and hence, tiles) which show up in the foliation (and tiling) of $S$ and $S_p$. Also, note that each tile is foliated by the level curves $C_t^i$, in a manner so that each tile which is not a neighborhood of a point of minima or maxima, contains exactly one self intersecting singular leaf. With that in mind, we have the following tiles: 

(i) \textbf{Circle intersects circle}: Figure~\ref{gamma} depicts the singularity (and the tile generated from it) \bm{$T_3$}. The top row corresponds to the 3-D embedding on the left and the bottom row corresponds to the 3-D embedding on the right. The tile \bm{$T_3$} is a pair of pants with three boundary circles (hence the subscript `3' in the notation), lying on the sphere. The green shaded portion shown in Figure~\ref{gamma} is NOT a part of the tile.

We classify the saddles corresponding to \bm{$T_3$} into two categories: 

\begin{definition}
a) \textbf{Up saddle}: A saddle such that two curves pass through the saddle to join and become one curve, going in the direction of decreasing `$t$'.\\
In Figure~\ref{gamma}, in the last row, the 3-D embedding on the left depicts an up saddle.

b) \textbf{Down saddle}: A saddle such that one curve passes through the saddle to split into two, going in the direction of decreasing `$t$'.\\
In Figures ~\ref{gamma}, in the last row, the 3-D embedding of the tile, on the right, depicts a down saddle.
\end{definition}

\begin{figure}[h!]
\labellist
\small\hair 2pt
\pinlabel {before saddle}  at 134 208
\pinlabel {at level of saddle} at 368 208
\pinlabel {after saddle} at 607 210
\pinlabel {tile \bm{$T_3$}} at 136 32
\pinlabel {3-D embeddings of \bm{$T_3$}} at 492 32
\endlabellist
\centering
\includegraphics[width=14cm, height=9cm]{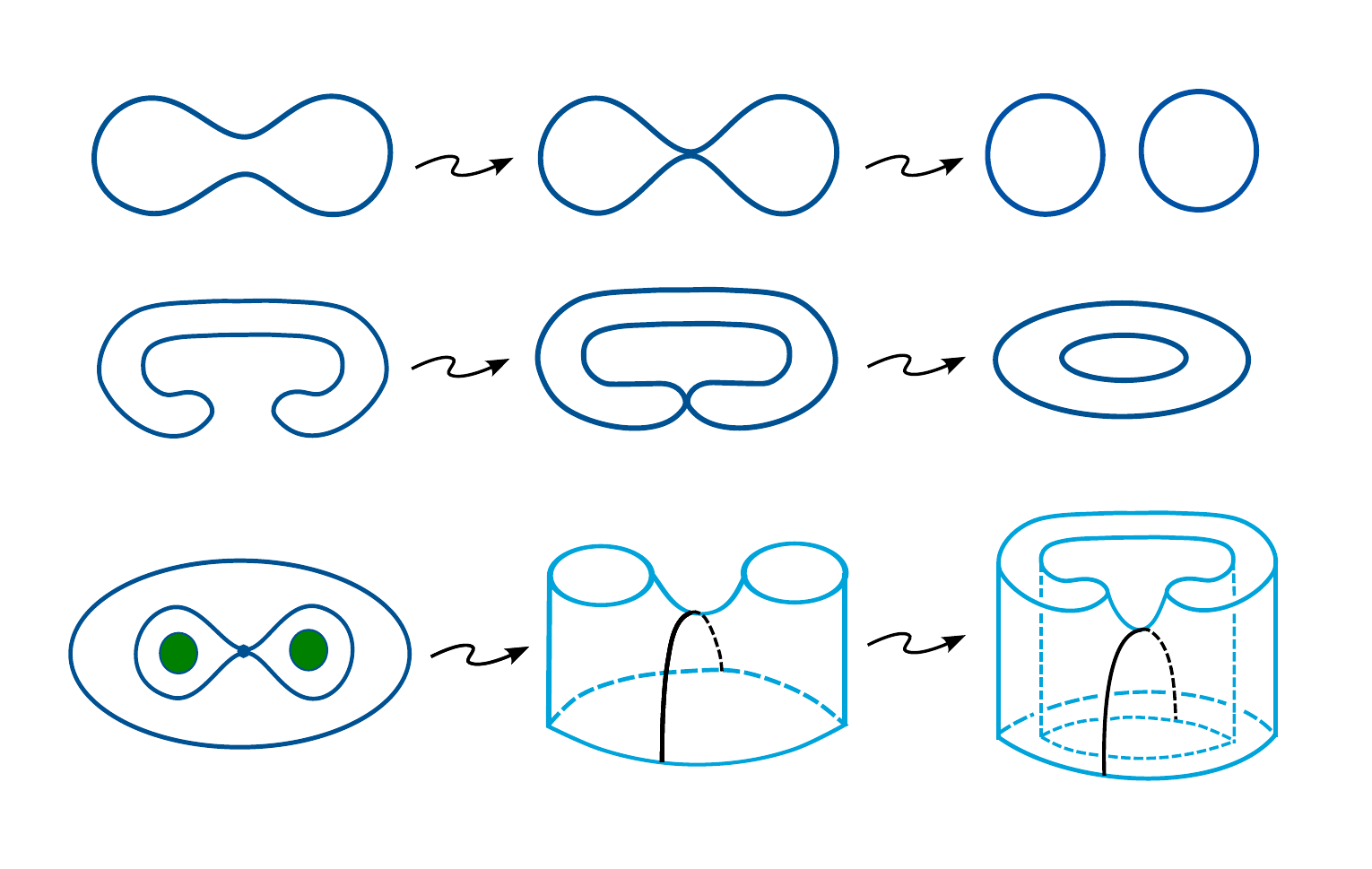}
\caption{Circle intersecting itself - \bm{$T_3$}}
\label{gamma}
\end{figure}

(ii) \textbf{Points of extrema}: The other type of singularity that occurs on the sphere is points of minima and maxima. Name the tile corresponding to these singularities, \bm{$T_1$}, shown in Figure \ref{T_1}. \bm{$T_1$} is a disc, with one boundary circle (hence the subscript `1' in the notation), lying on the sphere. We call a tile of type \bm{$T_1$}, a \textbf{min tile} (respectively, \textbf{max tile}), if the singularity in the tile is a local min (respectively local max).

(iii) \textbf{Punctured disc}: This tile type occurs {\em only} on $S_p$. Call this tile $T_p$. This will essentially be a very small neighborhood of a puncture on $S_p$. By `very small neighborhood', we mean that $T_p$ is a punctured disc containing no other singularities. 

\begin{figure}[h!]
\labellist
\small\hair 2pt
\pinlabel {tile \bm{$T_1$}} at 114 49
\pinlabel {3-D embeddings of \bm{$T_1$}} at 385 44
\endlabellist
\centering
\includegraphics[width=13.5cm, height=6cm]{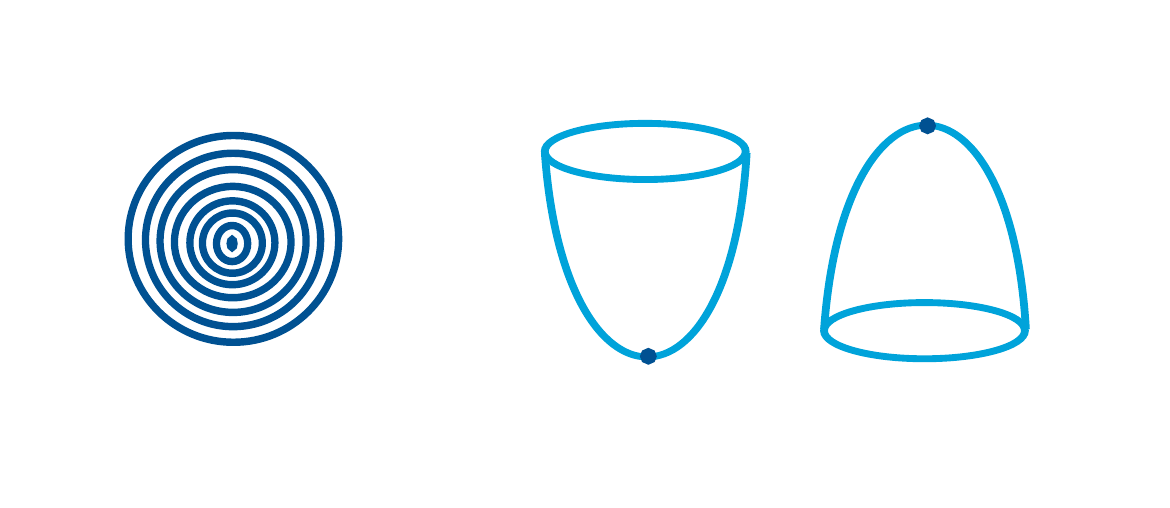}
\caption{Points of extrema - \bm{$T_1$}}
\label{T_1}
\end{figure}

Figures \ref{splitting_sphere} and \ref{twice_punctured_sphere} show the {\em{standard positions}} of the splitting sphere and the twice punctured sphere, respectively. The splitting sphere $S$, in its standard position has two tiles, one max tile and one min tile. On the other hand, the twice punctured sphere $S_p$, in its standard position has a total of six tiles: one each of a max and a min tile, two $T_3$ tiles and two $T_p$ tiles.

\begin{figure}[h!]
\labellist
\small\hair 2pt
\pinlabel {$K_2$} at 303 157
\pinlabel {$K_1$} at 165 160
\pinlabel {max $T_1$ tile} at 292 281
\pinlabel {min $T_1$ tile} at 300 50
\endlabellist
\centering
\includegraphics[width=8cm, height=6.5cm]{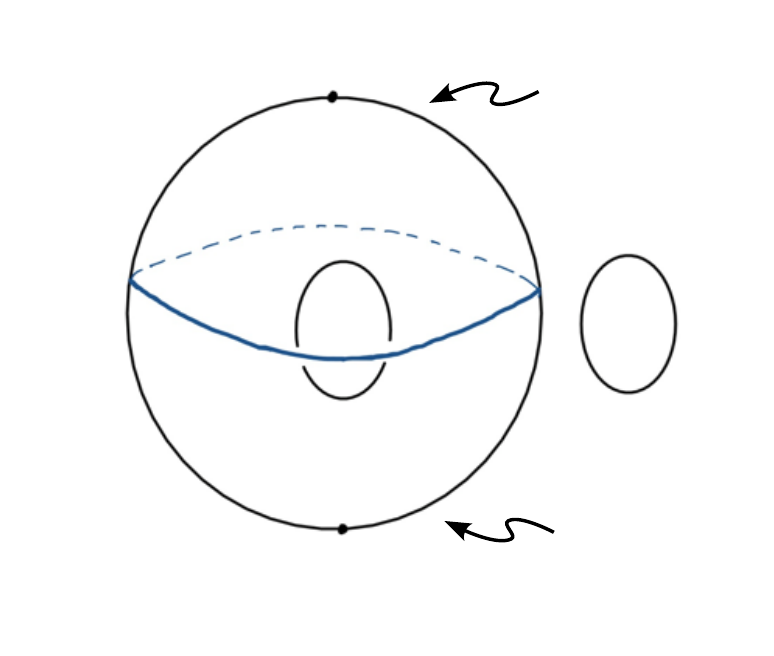}
\caption{Standard configuration of the splitting sphere}
\label{splitting_sphere}
\end{figure}

\begin{figure}[h!]
\labellist
\small\hair 2pt
\pinlabel {$T_3$ tile} at 332 222
\pinlabel {$T_3$ tile} at 329 69
\pinlabel {$T_p$ tile} at 241 188
\pinlabel {$T_p$ tile} at 238 104
\pinlabel {max $T_1$ tile} at 284 294
\pinlabel {min $T_1$ tile} at 278 5
\endlabellist
\centering
\includegraphics[width=12.5cm, height=6.5cm]{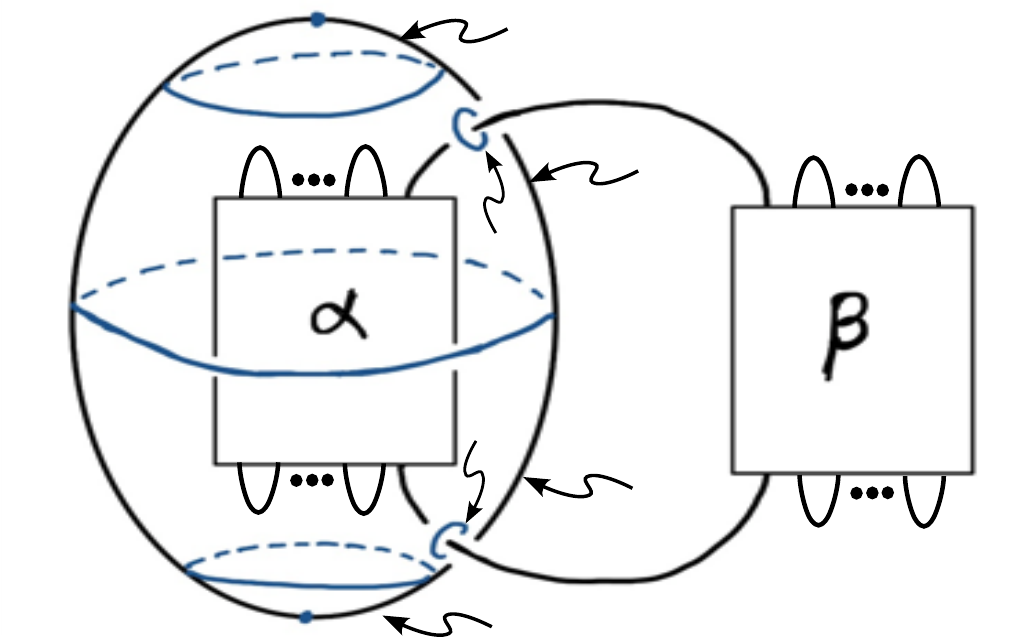}
\caption{Standard configuration of the twice punctured sphere}
\label{twice_punctured_sphere}
\end{figure}

\subsection{Graph and complexity function} \label{graphcomplexity}


Given the tiling of a sphere embedding ${S_i}$, we will now define a directed graph $G_{S_i}$ associated with it in the following way:

The set of vertices of the graph is the set of singularities on $S_i$ or, equivalently, the tiles in the foliation of ${S_i}$. There is an edge going from vertex $a$ to vertex $b$ if the tiles corresponding to the singularities are glued via a circle boundary component in the tiling of the disc and the singularity in the tile $a$ is at a larger $t$-value than the singularity in the tile $b$. Therefore, the valence of a tile $T_c$ is $c$. For the remainder of this paper, abusing notation, when we talk about a vertex of the graph $G_{S_i}$, we might mean either the vertex or the tile represented by it. It should be clear from context, what we mean.

Notice that every edge in the graph corresponds to a simple closed curve, which is a separating curve on the sphere. Now, if we had a cycle in the graph, that would give us a way to build a non-separating curve on the sphere (because the boundary components of each tile $\mbf{\alpha}$ separate the sphere into connected components and each vertex which shares an edge with $\mbf{\alpha}$ has to be in a different connected component of $S_i - \alpha$), which is impossible. Therefore, $G_{S_i}$ {\em is a tree for any embedding $S_i$}. 

We are now in a position to define the complexity function. Let $|T_c|$ be the number of type $T_c$ tiles in the tiling of $S_i$. Then, define the complexity function on $S_i$ as follows: 

$$ c(\mathbf{S_i}) = |T_3|$$ 
Lemmas \ref{bla} and \ref{bla_puncture} in \S \ref{mainresult} tells us why it makes sense to define the complexity function this way.

\section{\centering Main Result}  \label{mainresult}

We prove the following Lemmas which we will later use to prove Theorems \ref{thm1} and \ref{thm2}: 

\begin{lemma}

Let $X_i, i = 1, 2, ..., n$ be $n$ simplicial complexes of degree 2 which are glued together along boundary circles, to make a surface $X$. Let there be $|X_i|$ many $X_i$'s such that each such piece is glued in the same manner. Let $\chi(X_i)$ be the Euler characteristic of $X_i$. Then, we have:

$$ \chi(X) = \sum_{i=1}^{n} |X_i| \chi(X_i) $$

\label{surfaceec}

\end{lemma}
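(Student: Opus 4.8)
The plan is to prove this as a purely combinatorial statement about Euler characteristics, exploiting the fact that the gluing locus is a disjoint union of circles, each of which has Euler characteristic zero. Recall that for a finite two-dimensional simplicial complex the Euler characteristic is the alternating cell count $\chi = V - E + F$, and that this quantity is additive under unions in the inclusion--exclusion sense: if $A$ and $B$ are subcomplexes, then $\chi(A \cup B) = \chi(A) + \chi(B) - \chi(A \cap B)$.

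First I would fix a simplicial structure on $X$ compatible with the decomposition, i.e.\ one in which each piece and each gluing circle is a subcomplex. Write $P_1, \dots, P_N$ for the individual pieces, so that $N = \sum_{i=1}^n |X_i|$ and each $P_j$ is a copy of some $X_i$. I would then observe that, because $X$ is a surface, every gluing circle lying in the interior of $X$ is a boundary component shared by exactly two pieces: a circle along which three or more pieces met would produce non-manifold points. (This is consistent with the tiling picture, where each gluing circle corresponds to a single edge of the dual tree $G_{S_i}$ joining exactly two tiles.) In particular all intersections of three or more distinct pieces are empty.

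Next I would compare $\sum_{j} \chi(P_j)$ with $\chi(X)$ by cell counting. Every vertex, edge, and face of $X$ lying in the interior of a piece is counted exactly once in the sum, while every cell lying on a gluing circle $C$ is counted twice, once for each of the two pieces meeting along $C$. Hence the sum overcounts $\chi(X)$ by exactly $\sum_{C} \chi(C)$, the sum ranging over the gluing circles; equivalently, inclusion--exclusion (with all higher intersections empty) gives
\begin{equation*}
\chi(X) = \sum_{j=1}^{N} \chi(P_j) - \sum_{C} \chi(C).
\end{equation*}
Since each $C$ is a simplicial circle it has equal numbers of vertices and edges and no $2$-cells, so $\chi(C) = V_C - E_C = 0$ and the correction term vanishes. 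Grouping the $N$ pieces by type then yields $\chi(X) = \sum_{j=1}^{N} \chi(P_j) = \sum_{i=1}^{n} |X_i|\,\chi(X_i)$, as claimed.

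The only real subtlety --- the step I would treat most carefully --- is the bookkeeping of the overlaps: one must verify that the gluing locus is genuinely one-dimensional, so each shared circle contributes $\chi = 0$, and that no circle is shared by more than two pieces, so the inclusion--exclusion terminates after the pairwise term. Both facts follow from the hypothesis that $X$ is a surface (equivalently, from the structure of the dual tree $G_{S_i}$), and once they are in hand the computation is routine.
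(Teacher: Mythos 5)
Your proof is correct and is essentially the paper's own argument: both rest on the cell count showing that each gluing circle, having equal numbers of vertices and edges and no $2$-cells, contributes $\chi = 0$, so the Euler characteristics of the pieces simply add. The only cosmetic difference is that the paper cuts along one circle at a time and inducts, whereas you run the inclusion--exclusion over all gluing circles at once.
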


\begin{proof}

For any simplicial complex $C$, let $n_i(C)$ be the number of $i$-cells in $C$. \\
Consider the case where $X$ is obtained from $X_1$, $X_2$ by gluing $X_1$ with $X_2$ via circles. Let $\tau$ be the gluing simple closed curve.\\
Now, let $X$ have a cell decomposition in which the circle $\tau$ is a subcomplex, and so $\tau$ consists of $k$ 0-dimensional cells and $k$ 1-dimensional cells for some $k \geq 1$. \\
Cutting $X$ along $\tau$ to get $X_1 \bigcup X_2$ and counting cells, we get:

\begin{equation}
n_0(X) = n_0(X_1) + n_0(X_2) - k
\end{equation}  

\begin{equation}
n_1(X) = n_1(X_1) + n_1(X_2) - k
\end{equation}

\begin{equation}
n_2(X) = n_2(X_1) + n_2(X_2)
\end{equation}

(1) - (2) + (3) gives us: 
$$ \chi(X) = \chi(X_1) + \chi(X_2) $$

Induction completes the proof.
 
\end{proof}

\begin{lemma}
Given any geometric realisation and tiling of the splitting sphere $S$, the following combinatorial relation holds:\\
 $|T_1| - |T_3| = 2$, \\
where $|T_i|$ is the number of tiles of type $T_i$ in the foliation of $S$.

\label{bla}
\end{lemma}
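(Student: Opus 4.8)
The plan is to read the statement off directly from the Euler characteristic additivity established in Lemma~\ref{surfaceec}, treating the tiles $T_1$ and $T_3$ as the glued pieces $X_i$. Since $S$ is a $2$-sphere we have $\chi(S) = 2$, and the only tile types appearing in the foliation of $S$ (as opposed to $S_p$) are the extremal discs $T_1$ and the pair-of-pants saddle tiles $T_3$; the punctured tile $T_p$ never occurs on $S$. So the entire content of the lemma reduces to a one-line substitution once the two local Euler characteristics are known.

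First I would record the Euler characteristics of the two tile types. The tile $T_1$ is a disc (a neighborhood of a single extremum), so $\chi(T_1) = 1$. The tile $T_3$ is a pair of pants, i.e.\ a sphere with three open discs removed, hence $\chi(T_3) = 2 - 3 = -1$; equivalently it deformation retracts onto a wedge of two circles. Applying Lemma~\ref{surfaceec} with these two tile types then gives
\[
2 = \chi(S) = |T_1|\,\chi(T_1) + |T_3|\,\chi(T_3) = |T_1| - |T_3|,
\]
which is precisely the asserted relation $|T_1| - |T_3| = 2$.

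The only point requiring care — and the step I would treat as the main obstacle — is verifying that the hypotheses of Lemma~\ref{surfaceec} genuinely apply to the tiling of $S$: namely that $S$ is covered by the tiles, that distinct tiles meet only along full boundary circles, and that every such gluing circle is of the type handled by the lemma. The thickened-singular-leaf description in \S\ref{tilingandfoliation} supplies this, provided the annular product regions between consecutive singular leaves are absorbed into the gluing identifications; since an annulus has Euler characteristic $0$, it is harmless either to collapse it into a gluing circle or to treat it as an auxiliary valence-$2$ piece, and in neither case is the count altered. As an independent check one may instead argue purely graph-theoretically: $G_S$ is a tree (proved in \S\ref{graphcomplexity}), so with $V = |T_1| + |T_3|$ vertices it has $V - 1$ edges; equating the handshake sum $|T_1| + 3|T_3|$ (the tile $T_c$ has valence $c$) with $2(V-1)$ again yields $|T_1| - |T_3| = 2$.
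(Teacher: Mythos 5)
Your proposal is correct and follows essentially the same route as the paper: apply Lemma~\ref{surfaceec} to the tiling of $S$ with $\chi(T_1)=1$, $\chi(T_3)=-1$, and $\chi(S)=2$ to read off $|T_1|-|T_3|=2$, with your verification of the gluing hypotheses being a careful elaboration the paper leaves implicit. Your independent graph-theoretic check (tree property of $G_S$ plus the handshake count $|T_1|+3|T_3|=2(|T_1|+|T_3|-1)$) is a valid bonus not in the paper, but the core argument is the same.
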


\begin{proof}

From Lemma~\ref{surfaceec}, we have the following equation: 

\begin{align*}
 \chi(X) &= \sum_{i=1}^{n} |X_i| \chi(X_i) \\
\end{align*}

Reconstructing $S$, gluing together tiles of type $T_1$ and $T_3$, and using the equation above, we get the following relationship:

\begin{align*}
2 &= |T_3|\chi(T_3) + |T_1|\chi(T_1)\\
  &= |T_3|(-1) + |T_1|(1) \\
  &= |T_1| - |T_3|  
\end{align*}
\end{proof}

We have a similar result for $S_p$, with slight modifications. We consider the compact subsurface of $S_p$ which consists of only tiles of type $T_1$ and $T_3$. It is enough to consider this because the number of tiles of type $T_p$ on {\em any} geometric realisation of $S_p$ is fixed, and is equal to two. This sub-surface is an annulus, say $\mbf{A_p}$, because this is essentially a compact sub-surface of a twice punctured sphere, with two boundary components. Therefore, for $S_p$, we get the following result: 

\begin{lemma}
Given any geometric realisation and tiling of the twice punctured sphere $S_p$, the following combinatorial relation holds:\\
 $|T_1| = |T_3|$, \\
where $|T_i|$ is the number of tiles of type $T_i$ in the foliation of $S_p$.

\label{bla_puncture}
\end{lemma}

\begin{proof}

From Lemma~\ref{surfaceec}, we have the following equation: 

\begin{align*}
 \chi(X) &= \sum_{i=1}^{n} |X_i| \chi(X_i) \\
\end{align*}

Reconstructing $A_p$ as a union of $T_1$'s and $T_3$'s, we get the following relation:

\begin{align*}
\chi(A_p) = 0 &= |T_3|\chi(T_3) + |T_1|\chi(T_1)\\
  &= |T_3|(-1) + |T_1|(1) \\
  &= |T_1| - |T_3|  
\end{align*}

Now, the observation that $S_p = A_p \bigcup T_p \bigcup T_p$ completes the proof.

\end{proof}

We are now ready to prove Theorem \ref{thm1}:

\begin{proof}
Lemma \ref{bla} tells us that removing one singularity of type $T_3$ also gets rid of one singularity of type $T_1$, and vice versa. It also tells us that there will always be more than 2 singularities of type $T_1$ in any foliation of any $S_i$. Notice that for a plat presentation of a split link with a split diagram, the sphere has only two tiles: a min $T_1$ and a max $T_1$, this is shown in Figure \ref{splitting_sphere}. As mentioned before, this is called the {\em standard configuration} of $S$. With this setup, to simplify a plat presentation $L_i$ of a split link to an `obviously' split plat presentation, all we need to do is reduce the corresponding sphere $S_i$ to its standard configuration. Towards achieving that goal, our strategy going forward is to remove singularities on $S_i$ by doing plat moves on the split link. Note that we employ the same strategy to get the sequence for Theorem \ref{thm2} i.e. our aim is to reduce the geometric realisation of $S_p$ corresponding to a `complicated' plat presentation of a composite knot to an `obviously' composite one. We start out by removing any possible crossings in the link diagram via braid isotopies or double coset moves. Then, we proceed to removing singularities from the sphere foliation using the pocket move in a fashion so that $c(S_{i+1}) < c(S_i)$. We stop this process when we reach some $L_k$ so that $c(S_k) = 0$. We will use the directed graph defined in the previous section to help keep track of the process of removing singularities from the sphere foliation. Toward that end, the next two Lemmas tell us when can a vertex (i.e. tile or singularity) be removed from the graph using plat moves: 

\begin{lemma}

For a given $L_i$ in the sequence of Theorems \ref{thm1} or \ref{thm2}, in the directed graph $G_{S_i}$, if there is a vertex $V$ which satisfies one of the following conditions: 

a) $V$ represents a min tile of type \bm{$T_1$} connected to a vertex $N$ representing a tile containing a down saddle, such that any level curve corresponding to $V$ does not contain any other level curve 

b) $V$ represents a max tile of type \bm{$T_1$} connected to a vertex $U$ representing a tile containing an up saddle such that any level curve corresponding to $V$ does not contain any other level curve then, we can remove the vertex $V$ using the pocket move, so that we get to $L_{i+1}$ in the sequence given in Theorem 1, where $c(S_{i+1}) < c(S_i)$.

\label{lemma5}

\end{lemma}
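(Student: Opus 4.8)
The plan is to prove case (a) in full and deduce case (b) from the height-reversing symmetry $h \mapsto 1-h$, which interchanges minima with maxima and down saddles with up saddles and carries pocket moves with bottom bridges to pocket moves with top bridges. So assume $V$ is a min tile joined to a down-saddle tile $N$ of type \bm{$T_3$}; since the minimum of $V$ sits at height $t<0$, below $N$, the graph edge is oriented from $N$ to $V$, exactly as in the hypothesis.

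First I would pin down the local geometry. Because $N$ is a down saddle, as $t$ decreases through its critical level $t_N$ a single level curve splits into two, and one of the two lower curves is the circle $\gamma := \partial V$ along which $N$ and $V$ are glued; below $\gamma$ the sphere $S_i$ runs down to the minimum of $V$ at height $t<0$. As $\gamma$ lies in a level plane $P_{t_\gamma}$, it bounds a planar disc $\Delta \subset P_{t_\gamma}$, and $V \cup \Delta$ bounds a $3$-ball $B$, the ``pocket,'' extending downward below the bottom bridges of the plat. Next I would invoke the innermost hypothesis: since no level curve of $V$ contains another level curve, the disc cut off by $\gamma$ meets $S_i$ only in $\gamma$, so after the surgering-off of empty curves from \S\ref{level curves} the pocket $B$ contains a nonempty part of $L_i$ and no other piece of $S_i$. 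Because $S_i$ is disjoint from $L_i$ and $B$ caps off below height $0$, no strand of $L_i$ can leave $B$ except through $\gamma$; hence every strand entering $B$ must turn around inside it, and $B \cap L_i$ is a nonempty collection of bottom bridges of the plat together with the arcs running up to $\gamma$. After clearing superfluous crossings by braid isotopies and double coset moves, and if needed placing $L_i$ in the special configuration of \S\ref{speial_plats}, this is a consecutive block of bottom bridges.

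I would then apply a generalized pocket move to this block, as in Figures~\ref{pocket_general} and~\ref{block_pocket}: choose a path that is monotone in $h$, exits $B$ through $\gamma$, and runs up to the top level $t=1$, and drag the whole block of bottom bridges along it, out of the pocket and over the remainder of the plat. By Lemma~\ref{pmisdcm} this is realized by braid isotopies and a sequence of double coset moves, so $\mathcal{L}$, the bridge index, and the double coset are all preserved and we land on a legitimate $L_{i+1}$. After the move the pocket $B$ contains no part of $L_{i+1}$, so the finger $V$ can be isotoped upward across the level $t_N$ and cancelled; concretely, I take $S_{i+1}$ to be the splitting sphere for $L_{i+1}$ obtained from $S_i$ by this cancellation, which merges the two lower curves at $N$ back into one and deletes exactly the tiles $V$ and $N$. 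Innermostness guarantees no other tile is disturbed, so $|T_3|$ drops by one (and $|T_1|$ by one, consistent with Lemma~\ref{bla}), giving $c(S_{i+1}) = c(S_i)-1 < c(S_i)$.

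The main obstacle I anticipate is the middle step: verifying that the contents of the pocket genuinely form a single cleanly-draggable consecutive block of bottom bridges, and that the pocket-move path can be chosen disjoint from the rest of $L_i$ and from $S_i \setminus V$ so that one generalized pocket move empties $B$; and then confirming that the cancellation of $V$ and $N$ introduces no new saddles or extrema on $S_{i+1}$. These are the points where the innermost hypothesis and the preliminary normalization by braid isotopies, double coset moves, and the special configuration do the real work.
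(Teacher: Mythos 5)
You correctly set up the pocket geometry, the innermost hypothesis, and the endgame (empty the pocket, then surger off $V$ while cancelling the saddle in $N$, dropping $|T_3|$ by one in accordance with Lemma \ref{bla}), and your reduction of case (b) to case (a) by height reversal is equivalent to the paper's remark. But your middle step diverges from the paper precisely at the point you yourself flag as the ``main obstacle,'' and that obstacle is a genuine gap rather than a routine verification. You propose to drag the pocket's contents along an $h$-monotone path up to the level $t=1$ and ``over the remainder of the plat,'' and for the subsequent sphere simplification to be legitimate you need the entire isotopy to avoid $S_i \setminus V$: otherwise the moved strands can cross the sphere, and after cancelling $V$ and $N$ the resulting surface is no longer a splitting (or twice-punctured) sphere for $L_{i+1}$, so the inequality $c(S_{i+1}) < c(S_i)$ is about a surface that no longer certifies anything. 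You do not establish that such a path exists, and none of your normalizations supplies it --- braid isotopies and double coset moves move the link, not the sphere, and the special configuration of \S\ref{speial_plats} is used in the paper only for the punctured tile $T_p$ in Lemma \ref{remove_punctured_tiles}. Note in particular that the max tiles of $S_i$ sit at heights above $t=1$, so a bridge carried to the top and over the plat can perfectly well be forced through the upper caps of $S_i$; escaping upward within the correct complementary component of $S_i$ is exactly what would have to be proved, and your proposal leaves it as an acknowledged unknown.

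The paper's proof sidesteps this entirely by choosing a different destination: not the top of the plat, but the min tile $V_1$ closest to $V$ in the graph $G_{S_i}$, whose existence is guaranteed because $G_{S_i}$ is a finite tree (one walks from the down saddle $N$ through successive $T_3$ tiles until a min tile must appear). One then takes a path $\alpha$ on the sphere itself, from the saddle point $N_V$ to $V_1$, transverse to the foliation, and thickens it to a $3$-ball $N(\alpha)$ lying entirely on one side of $S_i$; the bridges inside the level curves of $V$ are dragged inside $N(\alpha)$ from $N_V$ into the pocket below $V_1$. Because the whole motion occurs in a ball on one side of the sphere, disjointness from $S_i$ is automatic, the strands land as bottom bridges in another min pocket (so the plat form is essentially preserved), and the move is still a pocket move, hence realizable by double coset moves via Lemma \ref{pmisdcm}. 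This pair of ideas --- the nearest min tile $V_1$ as destination, and the one-sided ball $N(\alpha)$ along the sphere --- is what your proposal is missing; without it, or an equivalent argument that your upward path can be chosen within the component of $\mathbb{R}^3 \setminus S_i$ containing the pocket's contents, the central claim that one pocket move empties the pocket while keeping the sphere intact is unsubstantiated.
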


\begin{proof}

\begin{figure}[h!]
\labellist
\small\hair 2pt
\pinlabel {$\alpha$}  at 193 90
\pinlabel {$V$} at 121 46
\pinlabel {$V_1$} at 230 46
\pinlabel {$\mbf{B}$} at 110 236
\pinlabel {$S_{V_1}$} at 177 170
\pinlabel {cross section of $N(\alpha)$} at 400 129
\pinlabel {$N(\alpha)$} at 265 180
\endlabellist
\centering
\includegraphics[width=11cm, height=8cm]{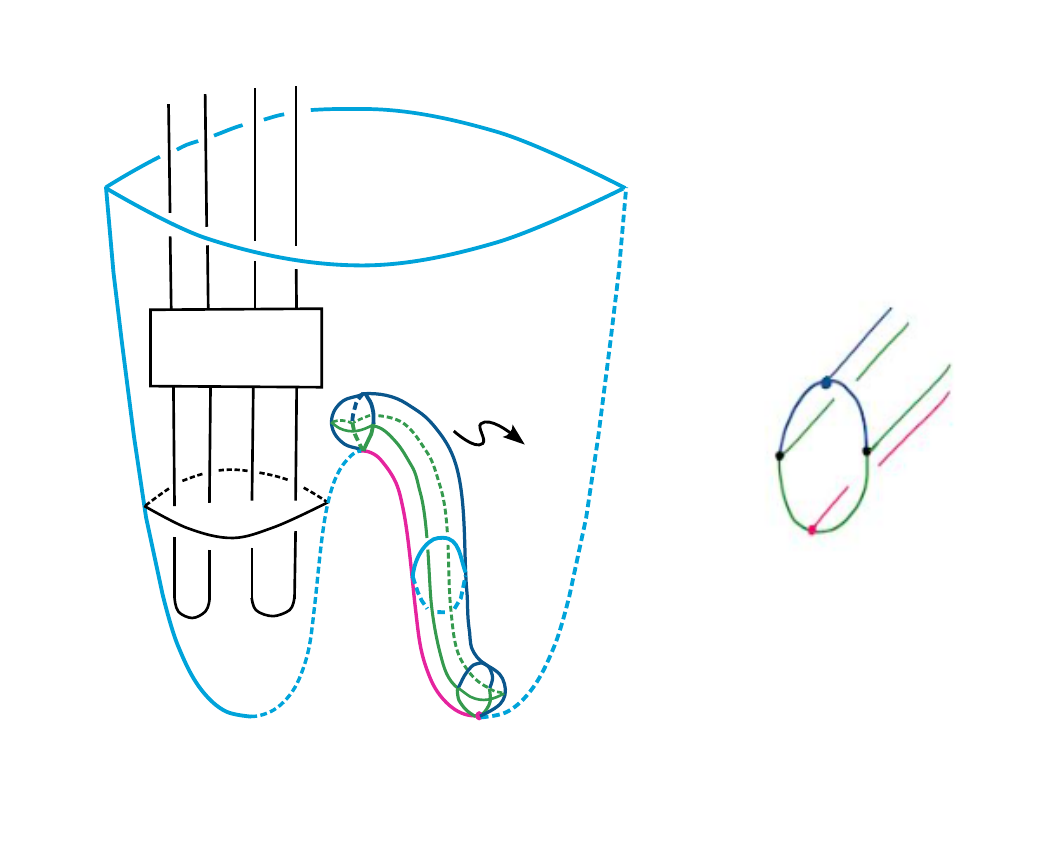}
\caption{Path isotopy to remove saddles}
\label{path_isotopy}
\end{figure}

\begin{figure}[h!]
\labellist
\small\hair 2pt
\endlabellist
\centering
\includegraphics[width=12cm, height=5cm]{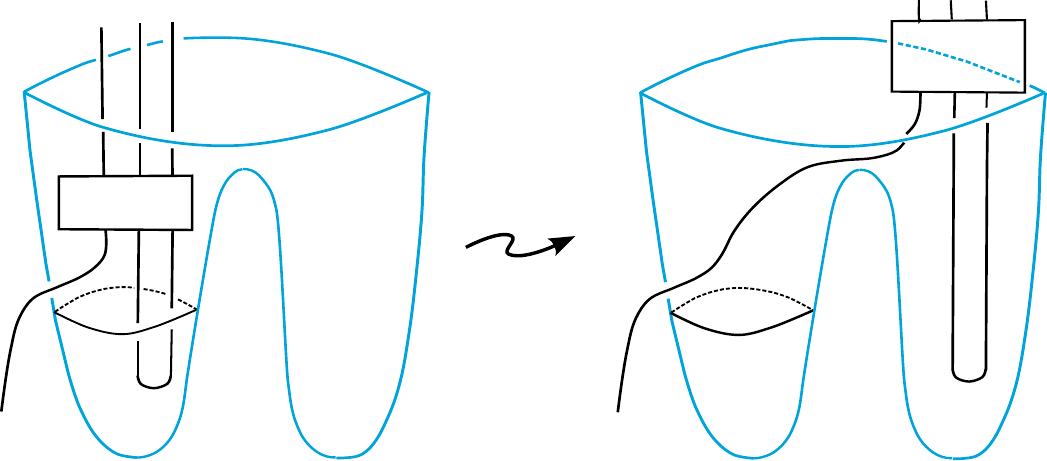}
\caption{Removing saddle attached to the tile containing the puncture}
\label{saddle_punctured_sphere}
\end{figure}

\begin{figure}[h!]
\labellist
\small\hair 2pt
\endlabellist
\centering
\includegraphics[width=12cm, height=5cm]{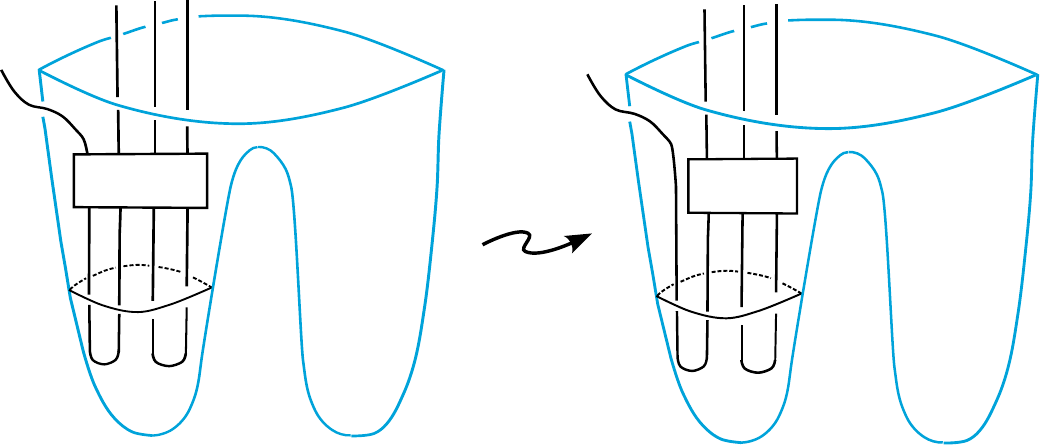}
\caption{Removing saddle attached to the tile containing the puncture}
\label{flip_punctured_sphere}
\end{figure}

Consider a vertex $V$ satisfying condition a). Then, we look for a closest min tile to $V$, in the graph $G_{S_i}$, say $V_1$. Note that such a tile always exists because $V$ is connected to a tile of type $T_3$, say $N$, containing a down saddle, where $N$ is either connected to a min tile of type $T_1$, or $N$ is connected to another tile of type $T_3$, which might contain either an up saddle or a down saddle. In either case, this process continues until we hit another min tile, because the graph $G_{S_i}$ is a finite tree. Consider the saddle $N_{V}$ contained in tile $N$, that the singularity in tile $V$ is induced by. Now, define a path $\alpha$ (the pink curve in Figure \ref{path_isotopy}), from $N_{V}$ to $V_1$, on the sphere, such that $\alpha$ is always transverse to the foliation, other than at points $\alpha(0)$, $\alpha(1)$. Consider a thickened up regular neighborhood of $\alpha$, say $N(\alpha)$ which stays completely on one side of the sphere, as shown in Figure \ref{path_isotopy}. Notice that $N(\alpha)$ is a 3-ball, positioned so that the boundary sphere is split up into the following four components: two copies of $\alpha \times I$ (the green part in the figure which lives on the surface, and the blue part which is completely on one side of the surface) glued along their respective $\alpha \times \{0\}$ and $\alpha \times \{1\}$ components. This gives us an annulus. Capping off this annulus with two discs gives us the two sphere which is the boundary of $N(\alpha)$.

For any singularity $X$ on $S_i$, let $t_{X}$ denote the level at which the singularity occurs. Now, we inspect $C^{i}_{t}$, for some $t \in (t_N, t_N+\epsilon)$. The knot strands inside the curves corresponding to $V$, we remove via the following isotopy: we move the bridges corresponding to these strands along the path $\alpha$, always staying inside the $3$-ball $N(\alpha)$, starting from the point $N_V$ on the boundary of $N(\alpha)$ and ending at the point $V_1$, also on the boundary of $N(\alpha)$. Notice that this corresponds to doing a { \em pocket move}. Once we have emptied out the simple closed curve corresponding to $V$, we can surger off the subdisc which is the tile represented by $V$, thus simultaneously cancelling the saddle $N$, while removing the vertex $V$. \\
We can do a similar isotopy for a vertex $V$ satisfying condition b), with the only difference being that we now need to look for the next closest max tile to $V$.

In both the cases above, we get a new plat $L_{i+1}$ such that $c(S_{i+1}) < c(S_i)$.

\end{proof}

\begin{lemma}
A vertex satisfying condition a) or b) in Lemma \ref{lemma5} always exists, if there are any tiles of type \bm{$T_3$} in the foliation of the sphere embedding $S_i$.

\label{lemma6}

\end{lemma}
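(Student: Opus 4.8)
The plan is to work entirely inside the directed tree $G_{S_i}$ of \S\ref{graphcomplexity} and to read conditions (a) and (b) of Lemma~\ref{lemma5} off local vertex data. Recall that each edge is oriented from the higher-$t$ tile to the lower-$t$ tile. Hence a max tile is a valence-one source (in-degree $0$, out-degree $1$), a min tile is a valence-one sink (in-degree $1$, out-degree $0$), an up-saddle has in-degree $2$ and out-degree $1$ (two curves become one as $t$ decreases), and a down-saddle has in-degree $1$ and out-degree $2$. In this language, condition (a) is the existence of a sink whose unique parent is a down-saddle, and condition (b) is the existence of a source whose unique child is an up-saddle; the ``innermost'' clause I defer to the last step.

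First I would record the global accounting. Writing $M,m,a,d$ for the numbers of max tiles, min tiles, up-saddles and down-saddles, the tree has $M+m+a+d-1$ edges, and equating this with the sum of out-degrees and with the sum of in-degrees gives $M+a+2d=m+2a+d=M+m+a+d-1$, whence $a=M-1$ and $d=m-1$. (These refine the relation $|T_1|-|T_3|=2$ of Lemma~\ref{bla}, which records only their sum.) The core is then a short contradiction. Connectivity first rules out an edge joining a max tile directly to a min tile: both endpoints have total valence one, so such an edge would be an entire connected component of $G_{S_i}$, incompatible with the presence of a saddle in the connected tree. Thus every sink has a saddle parent and every source a saddle child. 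Suppose now that (a) and (b) both fail combinatorially, i.e. every min tile's parent is an up-saddle and every max tile's child is a down-saddle. Sending each min tile to its parent is injective, since an up-saddle has out-degree $1$, so $m\le a=M-1$; sending each max tile to its child is injective, since a down-saddle has in-degree $1$, so $M\le d=m-1$. Adding $m\le M-1$ and $M\le m-1$ gives $m+M\le m+M-2$, a contradiction. Hence some min tile has a down-saddle parent, or some max tile has an up-saddle child.

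The remaining, and hardest, step is to promote this adjacency to a witness that is also innermost, i.e. none of whose level curves encloses another component of $C_t^i$ in its plane. I would phrase innermost-ness through the planar nesting of the curves $C_t^i$ and seek an extremal witness: for (a) the natural candidate is a min tile lying on the innermost branch beneath a lowest down-saddle, and for (b) a max tile on the innermost branch above a highest up-saddle. The difficulty --- and the only place where the embedding of $S_i$, not merely the combinatorics of $G_{S_i}$, enters --- is that the planar nesting need not match the tree order, and the contradiction above produces a witness somewhere in $G_{S_i}$ rather than at an extremal saddle of the correct sign. The main work is therefore to show that a planar-innermost min tile adjacent to a down-saddle, or a planar-innermost max tile adjacent to an up-saddle, always exists; I expect to do this by tracking an innermost empty level curve, which bounds an empty disc and stays empty as one moves toward its extremum, and then matching its saddle-adjacency against the dichotomy established above.

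For the twice-punctured sphere the same plan runs on the annulus $A_p=S_p\setminus(T_p\cup T_p)$, with $|T_1|=|T_3|$ from Lemma~\ref{bla_puncture} replacing Lemma~\ref{bla} in the accounting step, the two $T_p$ leaves being handled separately in Lemma~\ref{remove_punctured_tiles}.
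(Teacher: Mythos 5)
The combinatorial half of your argument is correct, and it is in fact tighter than what the paper does. Where you derive $a=M-1$, $d=m-1$ from in/out-degree counts on the tree, rule out a direct max--min edge by connectivity, and get the contradiction $m+M\le m+M-2$ from the two injections, the paper instead runs an infinite-descent argument: starting at a down saddle not adjacent to a min tile, it repeatedly finds further saddles below (using that the graph is a finite tree and that adjacent saddles cannot close a cycle), concluding that the process cannot continue indefinitely. Your double-counting is a clean, fully rigorous replacement for that descent, and the degree taxonomy (sources, sinks, in-degree $2$ up-saddles, in-degree $1$ down-saddles) is exactly right given the paper's edge orientation convention.

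However, there is a genuine gap at the step you yourself flag as the hardest: the innermost clause is left as a plan, and the plan as sketched does not match the setup. The level curves of the candidate min tile $V$ are \emph{not} empty---empty curves were already surgered off in \S\ref{level curves}, and the curves of $V$ generically contain link strands (emptying them out is precisely what the pocket move in the proof of Lemma~\ref{lemma5} does); the condition to verify is only that they contain no \emph{other level curve of $S_i$}. So ``tracking an innermost empty level curve, which bounds an empty disc'' is not the right object. More importantly, the paper does not establish an innermost witness by an extremal search at all: if the min tile $M$ located by the contradiction argument has a level curve containing a level curve from another tile, the paper \emph{applies a pocket move} to remove the inner level curve, thereby converting $M$ into a vertex satisfying condition a). This is legitimate because Lemma~\ref{lemma6} is used inside an iterative procedure (the proof of Theorem~\ref{thm1}) in which pocket moves are among the allowed moves. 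Your route instead requires proving outright that a planar-innermost min tile adjacent to a down saddle (or max tile adjacent to an up saddle) exists, and---as you concede when noting that planar nesting need not match the tree order---nothing in your counting argument controls which witnesses are innermost; the nesting could a priori obstruct every witness of the correct saddle sign. To close the gap, either prove that extremal claim or adopt the paper's resolution: clear any inner level curves with a pocket move before invoking Lemma~\ref{lemma5}. (Your final remark on $S_p$ is consistent with the paper, which defers the $T_p$-adjacent cases to Lemma~\ref{remove_punctured_tiles}.)
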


\begin{proof}

\begin{figure}[h!]
\labellist
\hair 6pt
\pinlabel {connecting scc}  at 158 156
\pinlabel {up saddle} at 453 175
\pinlabel {down saddle} at 55 225
\pinlabel {$s_1$} at 205 206
\pinlabel {$s_2$} at 311 170
\pinlabel {$b$} at 155 178
\pinlabel {$c$} at 292 187
\pinlabel {$d$} at 385 200
\endlabellist
\centering
\includegraphics[width=11cm, height=8cm]{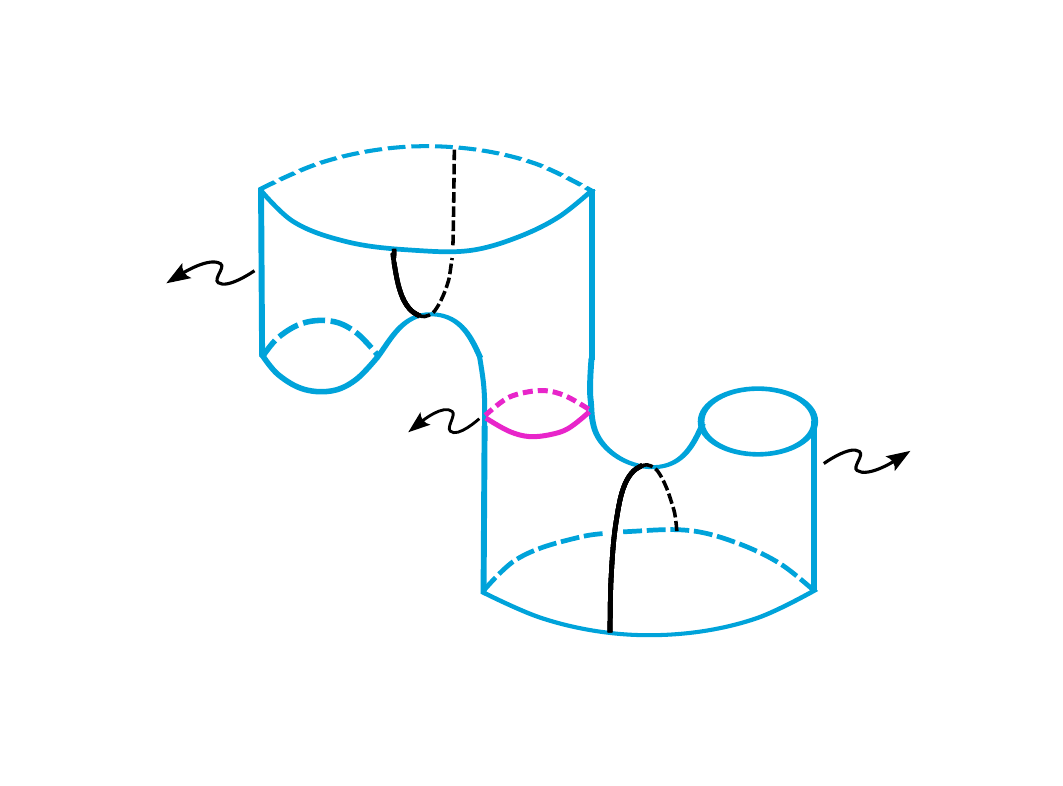}
\caption{A down saddle connected to an up saddle via a simple closed curve}
\label{updownsaddle}
\end{figure}

We argue this by contradiction. If there were no such vertices as described in the previous lemma, then no down saddle is connected to a min tile and no up saddle is connected to a max tile.

Then, near any down saddle $s_1$, the splitting sphere locally looks like as depicted in Figure~\ref{updownsaddle}, with the variation that $s_1$ could also have been connected to another down saddle, as opposed to the up saddle shown in the figure. Now, since the saddle $s_1$ is not connected to a min tile, neither of the two curves $b$, $c$ are capped off by discs. Therefore, $s_1$ is connected to two more saddles, say $s_2$ and $s_3$. Note that $s_2$ and $s_3$ cannot be connected to each other because that would mean the surface has genus, which is a contradiction. Then, if both $s_2$ and $s_3$ are down saddles, by the same argument, as above, we get two more down saddles each for $s_2$ and $s_3$, because they are not connected to any min tiles either. If this process continues indefinitely, we have infinitely many tiles, which can not happen.

Therefore, without loss of generality, let $s_2$ be an up saddle. Now the local picture is exactly as shown in Figure~\ref{updownsaddle}. Further, since no up saddle is connected to a max tile, $d$ cannot be capped off by a disc, therefore $d$ is further connected to another saddle and, as before, this process continues indefinitely unless there is an up saddle connected to a max tile or down saddle connected to a min tile. \\
If we locate a min tile $M$ connected to a down saddle $s_1$, such that the level curve corresponding to $M$ contains a level curve coming from another tile connected to $s_1$, then we use the pocket move to remove the inner level curve. Doing this, we now have a vertex satisfying condition a) of the previous lemma. The argument is exactly the same for a max tile.\\
\end{proof}

In light of Lemmas \ref{lemma5} and \ref{lemma6}, we now have a blueprint which tells us how to create the sequence in Theorem \ref{thm1}, monotonically decreasing the complexity function on the $L_i$'s. We terminate the sequence when we reach a $L_l$ which does not have any singularities on the splitting sphere, other than a top max tile and a bottom min tile. Then, any remaining crossings come from twisting the bridges or doing Reidemeister II moves or moving bridges completely over or under each other, which can be undone using the double coset moves or braid isotopies, leading us to an `obviously' split presentation. This completes the proof of Theorem \ref{thm1}.

\end{proof}

The proof for Theorem \ref{thm2} is very similar to that of \ref{thm1}. We remove tiles of type $T_3$ from $S_p$ in the same fashion that we did for $S$, except for when removing the saddle from a type $T_3$ tile which is also attached to a $T_p$ tile. To deal with that case, we utilise the following result:

\begin{lemma} \label{remove_punctured_tiles}
    If the punctured sphere $S_p$ has a tile of type $T_3$ with a saddle such that it is connected to both a tile of type $T_1$ and $T_p$, then we can remove the saddle and a $T_1$ tile corresponding to the $T_3$ tile using the moves detailed in Theorem \ref{thm2}.  
\end{lemma}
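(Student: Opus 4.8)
The plan is to reduce this special case to the pocket-move argument of Lemma \ref{lemma5} after first using a flip move to carry the obstructing puncture strand out of the way. First I would fix the local picture around the offending $T_3$ tile. Label its saddle $N$; since $N$ is connected to both a $T_p$ tile and a $T_1$ tile, exactly one of its three boundary circles caps off as the disc $V$ (the min or max tile), one bounds the punctured disc $T_p$ through which a single strand of the link pierces $S_p$, and the third connects to the rest of the tree $G_{S_p}$. Without loss of generality take $N$ to be a down saddle connected to a min tile $V$, so that we are in the setting of condition a) of Lemma \ref{lemma5}; the up-saddle/max-tile case is identical after reflecting the height function. The obstruction to applying Lemma \ref{lemma5} verbatim is the puncture: the strand running through $T_p$ lies between the bridge of $V$ and the nearest min tile $V_1$, so the path $\alpha$ used to pocket the bridge of $V$ cannot be made transverse to the foliation while avoiding that strand.

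To remove this obstruction I would first put the plat into the special configuration of \S \ref{speial_plats} (Figure \ref{std_composite}), so that the strand piercing the relevant puncture is the leftmost (or rightmost) strand of the plat and is free of crossings. With the puncture strand thus isolated, apply a flip move---more precisely the micro flip move of Figure \ref{microflip} applied to the block of strands forming the bridge of $V$---to flip that bridge over, carrying it around the free puncture strand (Figures \ref{saddle_punctured_sphere} and \ref{flip_punctured_sphere}). Because the puncture strand carries no crossings in this configuration, the flip is a genuine isotopy of $\mathcal{L}$ of the type permitted by Theorem \ref{thm2}, and it leaves the tile $T_p$ and the third boundary of $N$ unchanged while repositioning the bridge of $V$ on the opposite side of the puncture strand.

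After the flip the bridge of $V$ is disentangled from the puncture strand in the sense needed for Lemma \ref{lemma5}: there is now a path $\alpha$ from the saddle $N$ to the nearest min tile that is transverse to the foliation and disjoint from the puncture strand. I would then invoke Lemma \ref{lemma5} directly, dragging the bridge of $V$ along $\alpha$ inside a regular-neighborhood $3$-ball (a pocket move), emptying the level curve of $V$, and surgering off the resulting empty disc. This simultaneously cancels the saddle $N$ and deletes the vertex $V$, so $|T_3|$ drops by one (and $|T_1|$ by one, consistent with Lemma \ref{bla_puncture}), giving $c(S_{i+1}) < c(S_i)$. All moves used---the special-configuration adjustment, the flip move, and the pocket move---are among those allowed in Theorem \ref{thm2}.

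The main obstacle I expect is the second paragraph: verifying that the flip move actually disentangles the bridge of $V$ from the puncture strand rather than merely relocating a crossing, and checking that the flip introduces no new $T_3$ tiles elsewhere on $S_p$ that would cancel the complexity gain. This is precisely why the special configuration of \S \ref{speial_plats} is needed---having the puncture strand crossing-free guarantees the flip is supported in a ball meeting $S_p$ in a controlled way, so that its only effect on the tiling is the intended local one. A short case analysis of which of the six flip cases (and their top-bridge analogues) applies, according to whether $V$ is a min or max tile and on which side the puncture strand sits, then completes the argument.
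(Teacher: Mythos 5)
Your central mechanism is the right one and matches the paper's main case: put the plat in the special configuration of \S \ref{speial_plats} so the strand through the puncture is an outermost, crossing-free strand, use a flip move to carry the bridge of $V$ past it, and then run the pocket-move argument of Lemma \ref{lemma5} to empty the $T_1$ tile and cancel the saddle. This is exactly how the paper handles the configuration of Figure \ref{flip_punctured_sphere}, and your version even subsumes the paper's easier first case (Figure \ref{saddle_punctured_sphere}, where the puncture strand does not actually obstruct the path $\alpha$ and a pocket move alone suffices) --- though note that your claim that ``the path $\alpha$ \emph{cannot} be made transverse to the foliation while avoiding that strand'' is asserted as if it were automatic, when in fact it holds only in one of the two configurations; in the other you should simply skip the flip.

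The genuine gap is your ``without loss of generality'' description of the local picture: you assume the three boundary circles of the $T_3$ tile distribute as one $T_1$, one $T_p$, and one circle going to the rest of the tree. The hypothesis of the lemma also permits the $T_3$ tile to abut \emph{two} $T_p$ tiles, with its third boundary circle attached to a $T_1$ tile (or, in the paper's slightly broader treatment, to another $T_3$); since $S_p$ has exactly two punctures and each $T_p$ has valence one in $G_{S_p}$, both punctures can be adjacent to the same saddle. Your flip-then-pocket argument breaks down there: the special configuration of Figure \ref{std_composite} frees only one outermost strand, and flipping the bridge of $V$ around that single strand cannot disentangle it from two puncture strands flanking the saddle --- indeed in this configuration the boundary circle you want to empty is pinched between the two punctures. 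The paper's Case III sidesteps the punctures entirely: it passes through the third boundary circle of the $T_3$ tile, follows the tree until it finds some other $T_1$ tile with an adjacent saddle of the appropriate type, and removes that pair by the unpunctured splitting-sphere process of Lemmas \ref{lemma5} and \ref{lemma6}, still decreasing $|T_3|$ in accordance with Lemma \ref{bla_puncture}. You would need to add such a branch (or prove the two-$T_p$ configuration cannot occur, which it can) for your proof to cover the full hypothesis.
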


\begin{proof}
This type of situation can happen in the following different ways: 

Case I: The first case is the configuration shown in Figure \ref{saddle_punctured_sphere}. Then, as shown in the Figure, we can do a pocket move which empties out the specified $T_1$ tile. Then, we can do an isotopy on the surface to remove the saddle corresponding to the $T_3$ tile and we are done.

Case II: The second case is the configuration shown in Figure \ref{flip_punctured_sphere}. To remove the singularity in this case, we utilise the set up detailed in \S \ref{speial_plats}. Using the flip move, we first do flip moves at the appropriate spots (in the example shown in Figure \ref{std_composite}, we do the flip move at the pink crosses). This enables to free up the outermost strand (this will be the left most or right most depending on where the puncture is located). Once we have achieved that configuration, we again do a pocket move to empty out the specified $T_1$ tile and then do an isotopy to remove the saddle.

Case III: This is when a saddle containing $T_3$ tile is attached to two $T_p$ tiles. Then we consider the third boundary circle of the $T_3$ tile. It is either connected to another $T_3$ tile or a $T_1$ tile. In either event, we can find some $T_1$ tile and a corresponding saddle to be removed, via the process described for the splitting sphere.

This completes the proof.

\end{proof}

\printbibliography 

\end{document}